\documentclass[12pt,reqno]{amsart}

\usepackage{amssymb}

\usepackage[all]{xy}

\numberwithin{equation}{section}

\newtheorem{theorem}{Theorem}[section]
\newtheorem{proposition}[theorem]{Proposition}
\newtheorem{lemma}[theorem]{Lemma}

\theoremstyle{definition}
\newtheorem{definition}[theorem]{Definition}
\newtheorem{remark}[theorem]{Remark}

\begin{document}

\baselineskip=15pt

\title[Symplectic structure and Sasakian threefolds]{Symplectic structure on the character
varieties of Sasakian threefolds}

\author[I. Biswas]{Indranil Biswas}

\address{Department of Mathematics, Shiv Nadar University, NH91, Tehsil Dadri,
Greater Noida, Uttar Pradesh 201314, India}

\email{indranil.biswas@snu.edu.in, indranil29@gmail.com}

\author[A. N. Sengupta]{Ambar N. Sengupta}

\address{Department of Mathematics, University of Connecticut, Storrs, CT 06269, USA}

\email{ambarnsg@gmail.com}

\subjclass[2010]{53C25, 14M35, 53D30}

\keywords{Sasakian manifold, character variety, symplectic form, moment map}

\date{}

\begin{abstract}
Take a compact Sasakian threefold $M$ and consider the associated irreducible $\text{SL}(r,{\mathbb C})$--character
variety ${\mathcal R}\, :=\, \text{Hom}(\pi_1(M,\, x_0), \, \text{SL}(r, {\mathbb C}))^{ir}/
\text{SL}(r, {\mathbb C})$ of $M$, where $\text{Hom}(\pi_1(M,\, x_0), \, \text{SL}(r, {\mathbb C}))^{ir}$ is the
space of irreducible homomorphisms. We first construct a natural algebraic $2$--form on $\mathcal R$. Then
it is shown that this $2$--form is closed. Finally we show that the restriction of this
$2$--form to $\text{Hom}(\pi_1(M,\, x_0), \, \text{SU}(r))^{ir}$ is symplectic.
\end{abstract}

\maketitle

\section{Introduction}

Sasakian geometry is generally qualified as an odd-dimensional ``analog" of K\"ahler
geometry. In fact, just as the contact manifolds serve as substitutes for symplectic manifolds in odd
dimensions, the Sasakian manifolds are the odd dimensional counterparts of K\"ahler manifolds.
Although they are around for quite some time, in recent years they were extensively
studied. This resurgence is substantially influenced by the recently found relevance of
Sasakian manifolds in string theory. C. Boyer and K. Galicki published a series of papers
investigating various differential geometric aspects of Sasakian manifolds (see \cite{BG} and
references therein). Sasakian manifolds appeared in string theory through the work of J.
Maldacena \cite{Ma}; see the papers of J. Sparks, \cite{Sp1}, \cite{Sp2}, \cite{Sp3}, \cite{MS},
\cite{GMSY}, \cite{MSY}, and references therein for more details.

We consider compact Sasakian manifolds of dimension three. Take a three dimensional
compact Sasakian manifold $M$, and consider the irreducible homomorphisms from 
$\pi_1(M,\, x_0)$, where $x_0\, \in\, M$ is a base point, to $\text{SL}(r, {\mathbb C})$. Each
connected component of the irreducible character variety ${\mathcal R}\, :=\,
\text{Hom}(\pi_1(M,\, x_0), \, \text{SL}(r, {\mathbb C}))^{ir}/\text{SL}(r, {\mathbb C})$ is a 
smooth affine variety defined over the complex numbers.

We construct a holomorphic $2$--form on ${\mathcal R}$ (see Theorem \ref{thm1}), which is denoted by
$\mathcal B$. We prove that the $2$--form $\mathcal B$ on $\mathcal R$ is closed (see Theorem \ref{thm2}).
Finally we show that the restriction of $\mathcal B$ to
$$
\text{Hom}(\pi_1(M,\, x_0), \, \text{SU}(r)^{ir}\ \subset\ \mathcal R
$$
is a symplectic form (see Theorem \ref{thm3}).

This work has been much inspired by \cite{Go} and \cite{AB}. Our construction of the form $\mathcal B$
is a three-dimensional generalization of the construction of Goldman on the character variety of
a compact oriented surface (see \cite{Go}). The proof of Theorem \ref{thm3} uses the ideas of \cite{AB}.

The symplectic structure also arises from a different setting, in the context of Yang-Mills gauge theory.

\section{Sasakian manifolds}

Let $M$ be a $(2n+1)$-dimensional real $C^\infty$ orientable manifold. Let $TM_{\mathbb 
C}\,=\, TM\otimes_{\mathbb R}{\mathbb C}$ be its complexified tangent bundle. The Lie 
bracket operation on the locally defined vector fields on $M$ extends to a Lie bracket 
operation on the locally defined $C^\infty$ sections of $TM_{\mathbb C}$. A complex
subbundle of $TM_{\mathbb C}$ whose sheaf of sections are closed under the Lie bracket
operation is called \textit{integrable}.

A {\em CR-structure} on $M$ is an 
$n$-dimensional complex sub-bundle $T^{1,0}M$ of $TM_{\mathbb C}$ such that
$T^{1,0}M\cap \overline{T^{1,0}M}
\,=\,\{0\}$ and $T^{1,0}M$ is integrable. Given such a subbundle $T^{1,0}M$, there is 
a unique sub-bundle $S$ of rank $2n$ of the real tangent bundle $TM$ together with a
vector bundle homomorphism $I\,:\,S\,\longrightarrow\, S$ satisfying the following two conditions:
\begin{enumerate}
\item $I^{2}\,=\,-{\rm Id}_{S}$, and

\item $T^{1,0}M$ is the $\sqrt{-1}$--eigenbundle of $I$ acting on $S\otimes_{\mathbb R}{\mathbb C}$.
\end{enumerate}
The subbundle $\overline{T^{1,0}M}$ of $TM_{\mathbb C}$ will be denoted by $T^{0,1}M$.

A $(2n+1)$-dimensional manifold $M$ equipped with a triple $(T^{1,0}M,\, S,\, I)$ as above is 
called a {\em CR-manifold}. A {\em contact CR-manifold} is a CR-manifold $M$ with a contact 
$1$-form $\eta$ on $M$ such that ${\rm kernel}(\eta)\,=\,S$. The Reeb vector field is the unique
vector field on $\xi$ on $M$ such that $i_\xi (d\eta)\,=\, 0$ and $i_\xi \eta\,=\, 1$.
On a contact CR-manifold $M$, the above homomorphism $I$ extends to entire 
$TM$ by setting $I(\xi)\,=\,0$.

\begin{definition}\label{def1}
A contact CR-manifold $(M,\, (T^{1,0}M,\, S,\, I),\, (\eta,\, \xi))$ is a {\em 
strongly pseudo-convex CR-manifold} if the Hermitian form $L_{\eta}$ on $S_x$ defined by 
$$L_{\eta}(X,\,Y)\,=\,d\eta(X,\, IY),\ \ \, X,\,Y\,\in\, S_{x},$$
is positive definite for every point $x\,\in\, M$. 
\end{definition}

Given any strongly pseudo-convex CR-manifold $(M, \,(T^{1,0}M,\, S, \,I),\, (\eta,\, \xi))$, there
is a canonical Riemannian metric $g_{\eta}$ on $M$ which is defined to be
$$
g_{\eta}(X,Y)\ :=\ L_{\eta}(X,Y)+\eta(X)\eta(Y),\ \ \, X,\,Y\,\in \,T_{x}M,\, \ x\, \in\, M .
$$ 

\begin{definition}\label{def2}
A {\em Sasakian manifold} is a strongly pseudo-convex CR-manifold $$(M, \,(T^{1,0},\, S,\, I),\,
(\eta,\, \xi))$$ satisfying the condition that
$$[\xi,\, C^{\infty}(M;\, T^{1,0}M)]\,\subset\, C^{\infty}(M;\, T^{1,0}M).$$
In this case, the canonical metric $g_{\eta}$ is called the Sasakian metric.
\end{definition}

For the Sasakian metric $g_{\eta}$, the Reeb vector field $\xi$ is Killing and also $\vert 
\xi\vert\,=\,1$. For a Sasakian manifold $(M, \,(T^{1,0},\, S,\, I),\, (\eta,\, \xi))$, the 
Reeb vector filed $\xi$ induces a $1$-dimensional foliation; it is called
the Reeb foliation. The Reeb foliation will be denoted by
\begin{equation}\label{rf}
{\mathcal F}_{\xi}\ \subset\ TM.
\end{equation}
The sub-bundle ${\mathcal G}_{\xi}\,=\,T^{1,0}M\oplus T{\mathcal 
F}_{\xi}\subset TM_{\mathbb C}$ produces a transversely holomorphic
structure for the foliation ${\mathcal F}_{\xi}$. The form $d\eta$ is a transversely
K\"ahler structure. See \cite{BG} and references therein for Sasakian manifolds.

A compact Sasakian manifold $$(M,\, (T^{1,0}, \,S,\, I),\, (\eta, \,\xi))$$ is called {\em 
quasi-regular} if every leaf of the foliation ${\mathcal F}_{\xi}$ is closed. For any given 
compact Sasakian manifold $(M,\, (T^{1,0}, \,S,\, I),\, (\eta, \,\xi))$, there is another 
contact form $\eta^{\prime}$ with the Reeb vector field $\xi^{\prime}$ so that $(M,\, (T^{1,0}, 
\,S,\, I),\, (\eta^{\prime}, \,\xi^{\prime}))$ is quasi-regular \cite[Section 8.2.3]{BG}; see 
also \cite{Ru}, \cite{OV}. More precisely, we can take a Killing vector field $\chi$ commuting 
with $\xi$ such that $\xi^{\prime}\,=\,\xi+\chi$ and 
$\eta^{\prime}\,=\,\frac{\eta}{1+\eta(\chi)}$.

We give some simple examples of Sasakian manifolds; many more can be found in \cite{BG}.

Take a complex projective manifold $X$. Chose a K\"ahler form $\omega$ on $X$ such that the cohomology class
of $\omega$ --- which is an element of $H^2(X,\, {\mathbb R})$ --- lies in the image of $H^2(X,\, {\mathbb Z})$.
Then there is a holomorphic line bundle $L$ on $X$ equipped with a Hermitian
structure $h$ such that the curvature of the corresponding Chern connection $\nabla^{L,h}$ is
$\omega$. Consider the unit circle bundle
\begin{equation}\label{f1}
M\ :=\ \{v\,\in\, L\ \big\vert\ \, ||v||_h\,=\, 1\}\ \subset\ L .
\end{equation}
The connection $\nabla^{L,h}$ on $L$ preserves $M$. Therefore, the tangent bundle $TM$ decomposes
into a direct sum of horizontal and vertical tangent bundles. The horizontal tangent subbundle of
$TM$ will be denoted by $S$. The multiplication action of the unit circle $S^1\,=\, {\rm U}(1)$ on $M$
produces a vertical vector field; it will be denoted by $\xi$. Using the decomposition $TM\,=\, S\oplus
{\mathbb C}\cdot\xi$, the K\"ahler form $\omega$ and the
standard metric on $S^1$ together produce a Riemannian metric on $M$; this
Riemannian metric will be denoted by $g_\eta$. More precisely, $\omega$ produces a
positive symmetric form on $S$ (as $S$ is identified with the pullback of the tangent
bundle of $X$) and standard metric on $S^1$ 
produces a positive symmetric form on $T{\mathcal F}_{\xi}\,:=\, {\mathbb C}\cdot\xi$ (using the multiplication
action of $S^1\,=\,{\rm U}(1)$ on $M$); the metric $g_\eta$ is the direct sum of these two.
The connection form defining $\nabla^{L,h}$ produces a contact form $\eta$ on $M$; so the kernel of
$\eta$ is the horizontal tangent bundle. The almost complex structure on $X$ produces a homomorphism
$I\, :\, S\, \longrightarrow\, S$ such that $I^2\,=\, -{\rm Id}_S$. These data $(M,\, g,\, S,\,\xi,\,
\eta,\, I)$ define a Sasakian manifold. Every leaf of the foliation $\mathcal F_{\xi}$ is closed; in fact,
$M$ is a principal $S^1$--bundle over the leaf-space $X$. These are called regular Sasakian manifolds.

Suppose a finite group $\Gamma$ acts on $X$ via holomorphic isometries (for the K\"ahler
structure $\omega$) such that the action of lift to an action of $\Gamma$ on $L$ via holomorphic
isometries (for $h$). So the action of $\Gamma$ on $L$ preserves $M$ defined in \eqref{f1}. Assume that the
action of $\Gamma$ on $M$ is free. Note that the action of $\Gamma$ on $X$ need not be free. Then $M/\Gamma$
is a Sasakian manifold. In fact, it is a quasi-regular Sasakian manifold. If the action of $\Gamma$ on $X$
is not free, then $M/\Gamma$ is not a regular Sasakian manifold.

Here we are interested in Sasakian manifolds of dimension three. In other words, it will
be assumed that $n\,=\,1$.

\section{Flat bundles}

Let $M$ be a compact Sasakian manifold of dimension three. Fix a base point $x_0\, \in\, M$.
Fix an integer $r\, \geq\,2$. Given a homomorphism of the fundamental group
$$
\rho\ :\ \pi_1(M,\, x_0)\ \longrightarrow\ \text{SL}(r, {\mathbb C}).
$$
the standard action of $\text{SL}(r, {\mathbb C})$ on ${\mathbb C}^r$ produces an
action of $\pi_1(M,\, x_0)$ on ${\mathbb C}^r$; more precisely, the action of any $z\,\in\,
\pi_1(M,\, x_0)$ on ${\mathbb C}^r$ coincides with the action of $\rho(z)$. The homomorphism
$\rho$ is called \textit{irreducible} (also called \textit{simple}) if no nonzero proper subspace
of ${\mathbb C}^r$ is preserved by the action of $\pi_1(M,\, x_0)$ on ${\mathbb C}^r$.
The conjugation action of $\text{SL}(r, {\mathbb C})$ on itself produces an action of
$\text{SL}(r, {\mathbb C})$ on the space of all homomorphisms $\text{Hom}(\pi_1(M,\, x_0),
\, \text{SL}(r, {\mathbb C}))$; more precisely, the action of any matrix $A\,\in\, \text{SL}(r, {\mathbb C})$
on $\text{Hom}(\pi_1(M,\, x_0), \, \text{SL}(r, {\mathbb C}))$ sends any $\rho\, \in\,
\text{Hom}(\pi_1(M,\, x_0), \, \text{SL}(r, {\mathbb C}))$ to the homomorphism
$\pi_1(M,\, x_0)\, \longrightarrow\, \text{SL}(r, {\mathbb C})$ defined by
\begin{equation}\label{c1}
\gamma\ \longmapsto\
A^{-1}\rho(\gamma)A\ \in\ \text{SL}(r, {\mathbb C}), \ \ \gamma\ \in\ \pi_1(M,\, x_0).
\end{equation}

Let $$\text{Hom}(\pi_1(M,\, x_0), \, \text{SL}(r, {\mathbb C}))^{ir}\, \subset\,
\text{Hom}(\pi_1(M,\, x_0), \, \text{SL}(r, {\mathbb C}))$$ be the space of irreducible homomorphisms.
Note that the above action of $\text{SL}(r, {\mathbb C})$ on $\text{Hom}(\pi_1(M,\, x_0), \,
\text{SL}(r, {\mathbb C}))$ preserves $\text{Hom}(\pi_1(M,\, x_0), \, \text{SL}(r, {\mathbb C}))^{ir}$.
Let
\begin{equation}\label{e2}
{\mathcal R}\ :=\ \text{Hom}(\pi_1(M,\, x_0), \, \text{SL}(r, {\mathbb C}))^{ir}/\text{SL}(r, {\mathbb C})
\end{equation}
be the quotient space for this action of $\text{SL}(r, {\mathbb C})$ on $\text{Hom}(\pi_1(M,\, x_0),
 \, \text{SL}(r, {\mathbb C}))^{ir}$. This $\mathcal R$ is a finite dimensional smooth
complex manifold \cite{Go},
\cite[Corollary 1.3]{Ka}. It should be clarified that $\mathcal R$ need not be connected.
We will describe the holomorphic tangent bundle of $\mathcal R$.

We note that $\mathcal R$ has a natural algebraic structure given by the algebraic structure of
$\text{SL}(r, {\mathbb C})$ and the fact that the fundamental group $\pi_1(M,\, x_0)$ is finitely
presented. In fact, $\mathcal R$ is a smooth affine scheme defined over $\mathbb C$, as
$\text{SL}(r, {\mathbb C})$ is an affine variety.

Take any
\begin{equation}\label{e3}
\rho\ \in\ {\mathcal R},
\end{equation}
where $\mathcal R$ is constructed in \eqref{e2}. Choose a
\begin{equation}\label{e3b}
\rho'\ \in\ \text{Hom}(\pi_1(M,\, x_0), \, \text{SL}(r, {\mathbb C}))^{ir}
\end{equation}
that projects to $\rho$ under the quotient map $\text{Hom}(\pi_1(M,\, x_0), \,
\text{SL}(r, {\mathbb C}))^{ir}\, \longrightarrow\,{\mathcal R}$. Let
$$
\varpi\ :\ \widetilde{M}\ \longrightarrow\ M
$$
be the universal cover of $M$ corresponding to the base point $x_0$. Consider the
trivial holomorphic vector bundle $\widetilde{M}\times sl(r, {\mathbb C})
\, \longrightarrow\, \widetilde{M}$, where $sl(r, {\mathbb C})$ is the Lie algebra
of $\text{SL}(r, {\mathbb C})$ consisting of trace zero $r\times r$ matrices with complex entries.
The action of the Galois group $\pi_1(M,\, x_0)$ on $\widetilde M$ and the action of $\pi_1(M,\, x_0)$
on $sl(r, {\mathbb C})$ --- given by $\rho'$ (see \eqref{e3b}) and the adjoint action of $\text{SL}(r,
{\mathbb C})$ on $sl(r, {\mathbb C})$ --- together produce an action of $\pi_1(M,\, x_0)$ on 
$\widetilde{M}\times sl(r, {\mathbb C})$. The quotient
\begin{equation}\label{e4}
{\mathcal E}^{\rho'}
\ :=\ (\widetilde{M}\times sl(r, {\mathbb C}))/\pi_1(M,\, x_0)\ \longrightarrow\
{\widetilde M}/\pi_1(M,\, x_0)\ =\ M
\end{equation}
is a holomorphic vector bundle over $M$ whose fibers are Lie algebras isomorphic to
$sl(r, {\mathbb C})$
(see \cite{BS}, \cite{BK} for holomorphic vector bundles on Sasakian manifolds).
The trivial connection on the trivial vector bundle $\widetilde{M}\times sl(r, {\mathbb C})
\, \longrightarrow\, \widetilde{M}$ is preserved by the action of $\pi_1(M,\, x_0)$. Hence this
trivial connection produces a flat connection on the vector bundle ${\mathcal E}^{\rho'}$ in \eqref{e4}.
This flat connection on ${\mathcal E}^{\rho'}$ will be denoted by $\nabla^{\rho'}$.

The above flat vector bundle $({\mathcal E}^{\rho'},\, \nabla^{\rho'})$ on $M$ does not
depend on the choice of the lift $\rho'$ in \eqref{e3b} (of $\rho$). To see this, substitute $\rho'$
by the homomorphism $\rho_1\, :\, \pi_1(M,\, x_0)\, \longrightarrow\, \text{SL}(r, {\mathbb C})$
defined by $\gamma\, \longmapsto\, A^{-1}\rho'(\gamma)A$, where $A$ is a fixed element of
$\text{SL}(r, {\mathbb C})$. Let $({\mathcal E}^{\rho_1},\, \nabla^{\rho_1})$ be the flat vector
bundle on $M$ corresponding to $\rho_1$. Consider the isomorphism
$$
\Phi\ :\ \widetilde{M}\times sl(r, {\mathbb C})
\ \longrightarrow\ \widetilde{M}\times sl(r, {\mathbb C})
$$
defined by $(z,\,w)\, \longmapsto\, (z,\, A^{-1}wA)$. It is straightforward to check that $\Phi$ intertwines
the two actions of $\pi_1(M,\, x_0)$ on $\widetilde{M}\times sl(r, {\mathbb C})$ given by $\rho'$
and $\rho_1$. Consequently, $\Phi$ produces an isomorphism of flat vector bundles
$$
\widetilde{\Phi}\ :\ ({\mathcal E}^{\rho'},\, \nabla^{\rho'})\ \longrightarrow\
({\mathcal E}^{\rho_1},\, \nabla^{\rho_1}).
$$
This prove that the flat vector bundle $({\mathcal E}^{\rho'},\, \nabla^{\rho'})$ on $M$ does not
depend on the choice of the lift $\rho'$ in \eqref{e3b} (of $\rho$).

Let
\begin{equation}\label{e5}
{\mathcal L}^\rho \ \, \longrightarrow\ \, M
\end{equation}
be the sheaf of flat sections for the connection $\nabla^{\rho'}$ on ${\mathcal E}^{\rho'}$; it is a
locally constant sheaf on $M$ whose stalks are Lie algebras isomorphic to $sl(r, {\mathbb C})$. For the point
$\rho\ \in\ {\mathcal R}$ (see \eqref{e3}), we have
\begin{equation}\label{e6}
T_\rho {\mathcal R}\ \,=\ \, H^1(M,\, {\mathcal L}^\rho)
\end{equation}
\cite{Go}, \cite{AB}. The connection operator $d^\rho_0\,:=\, \nabla^{\rho'}\,:\, C^\infty(M;\, {\mathcal
E}^{\rho'})\,\longrightarrow\, C^\infty(M;\, {\mathcal E}^{\rho'}\otimes T^*M)$ extends to an operator
\begin{equation}\label{ed}
d^\rho_i
\ :\ C^\infty(M;\, {\mathcal E}^{\rho'}\otimes \bigwedge\nolimits^i T^*M)
\ \longrightarrow \ C^\infty(M;\, {\mathcal E}^{\rho'}\otimes
\bigwedge\nolimits^{i+1} T^*M)
\end{equation}
for all $i\, \geq\, 1$. We have the complex
\begin{equation}\label{ed2}
0\, \longrightarrow\, C^\infty(M;\, {\mathcal E}^{\rho'})\,
\xrightarrow{\,\,\, d^\rho_0\,\,\,}\, C^\infty(M;\, {\mathcal E}^{\rho'}\otimes T^*M)
\end{equation}
$$
\xrightarrow{\,\,\, d^\rho_1\,\,\,}\,
C^\infty(M;\, {\mathcal E}^{\rho'}\otimes\bigwedge\nolimits^2 T^*M)
\, \xrightarrow{\,\,\, d^\rho_2\,\,\,} \, C^\infty(M;\, {\mathcal E}^{\rho'}
\otimes\bigwedge\nolimits^3 T^*M)\,\longrightarrow\, 0,
$$
where $d^\rho_0,\, d^\rho_1,\, d^\rho_2$ are defined in \eqref{ed}.
The cohomology $H^1(M,\, {\mathcal L}^\rho)$ in \eqref{e6} has the following description
\begin{equation}\label{e7}
H^1(M,\, {\mathcal L}^\rho)\ \, =\ \, \frac{\text{kernel}(d^\rho_1)}{\text{image}(d^\rho_0)}.
\end{equation}
Combining \eqref{e6} and \eqref{e7}, we have the following
\begin{equation}\label{e7b}
T_\rho {\mathcal R}\ \,=\ \, \frac{\text{kernel}(d^\rho_1)}{\text{image}(d^\rho_0)}.
\end{equation}
Using the properties of Sasakian manifolds, the expression for the tangent bundle
of $\mathcal R$ in \eqref{e7b} can be considerably simplified.

For every $i\, \geq\, 0$, let
\begin{equation}\label{e8}
C^\infty(M;\, {\mathcal E}^{\rho'}\otimes \bigwedge\nolimits^i T^*M)_\xi\ \subset\
C^\infty(M;\, {\mathcal E}^{\rho'}\otimes \bigwedge\nolimits^i T^*M)
\end{equation}
be the subspace consisting of all $s\, \in\, C^\infty(M;\,
{\mathcal E}^{\rho'}\otimes \bigwedge\nolimits^i T^*M)$ such that
$$i_\xi s\ =\ 0\ =\ i_\xi d^\rho_i(s),
$$
where $d^\rho_i$ is defined in \eqref{ed} and $i_\xi$ is the contraction of differential forms by the
vector field $\xi$ (see Definition \ref{def2}); we note that the elements of
$C^\infty(M;\, {\mathcal E}^{\rho'}\otimes \bigwedge\nolimits^i T^*M)_\xi$ are called
basic forms. Since $d^\rho_{i+1}(d^\rho_i(s))\,=\, 0$, it follows that
$$
d^\rho_i(s) \ \in\ C^\infty(M;\, {\mathcal E}^{\rho'}\otimes \bigwedge\nolimits^{i+1} T^*M)_\xi
$$
for all $s\,\in\, C^\infty(M;\, {\mathcal E}^{\rho'}\otimes \bigwedge\nolimits^i T^*M)_\xi$. Let
\begin{equation}\label{ew}
\widehat{d}^\rho_i\ :\ C^\infty(M;\, {\mathcal E}^{\rho'}\otimes \bigwedge\nolimits^i T^*M)_\xi
\ \longrightarrow\ C^\infty(M;\, {\mathcal E}^{\rho'}\otimes \bigwedge\nolimits^{i+1} T^*M)_\xi
\end{equation}
be the restriction of $d^\rho_i$ to $C^\infty(M;\, {\mathcal E}^{\rho'}\otimes \bigwedge\nolimits^i
T^*M)_\xi$. Hence the complex in \eqref{ed2} has the subcomplex
\begin{equation}\label{e9}
0\, \longrightarrow\, C^\infty(M;\, {\mathcal E}^{\rho'})_\xi\,
\xrightarrow{\,\,\, \widehat{d}^\rho_0\,\,\,}\, C^\infty(M;\, {\mathcal E}^{\rho'}\otimes T^*M)_\xi
\end{equation}
$$
\xrightarrow{\,\,\, \widehat{d}^\rho_1\,\,\,}\,
C^\infty(M;\, {\mathcal E}^{\rho'}\otimes\bigwedge\nolimits^2 T^*M)_\xi
\, \xrightarrow{\,\,\, \widehat{d}^\rho_2\,\,\,} \, C^\infty(M;\, {\mathcal E}^{\rho'}
\otimes\bigwedge\nolimits^3 T^*M)_\xi\,\longrightarrow\, 0,
$$
where $\widehat{d}^\rho_i$ are as in \eqref{ew}. From \cite[Theorem 1.1]{Ka} and \eqref{e7b}
it can be deduced that
\begin{equation}\label{e10}
T_\rho {\mathcal R}\ =\ \frac{\text{kernel}(d^\rho_1)}{\text{image}(d^\rho_0)}\ =\
\frac{\text{kernel}(\widehat{d}^\rho_1)}{\text{image}(\widehat{d}^\rho_0)},
\end{equation}
where $\widehat{d}^\rho_0,\, \widehat{d}^\rho_1$ are as in \eqref{e9}.
To see \eqref{e10}, consider the partial $\text{SL}(r,{\mathbb C})$--connection $(E,\, D_\xi)$
given by the flat vector bundle $({\mathcal E}^{\rho'},\, \nabla^{\rho'})$ on $M$. So
we have
$$
(E,\, D_\xi) \ \in\ {\mathcal M}^s_{Bflat}(E,\, D_\xi)\ \subset\ {\mathcal R}
$$
(see \cite[Theorem 1.1]{Ka} for the notation ${\mathcal M}^s_{Bflat}(E,\, D_\xi)$). Next note that
$$
T_\rho {\mathcal M}^s_{Bflat}(E,\, D_\xi) \ =\ 
\frac{\text{kernel}(\widehat{d}^\rho_1)}{\text{image}(\widehat{d}^\rho_0)},
$$
where $\widehat{d}^\rho_0,\, \widehat{d}^\rho_1$ are as in \eqref{e9}. From \cite[Theorem 1.1]{Ka} we know
that ${\mathcal M}^s_{Bflat}(E,\, D_\xi)$ is an open subset of $\mathcal R$; when $M$ is
a quasi-regular Sasakian manifold, \cite[Theorem 1.1]{Ka} was proved earlier in \cite{BM}
(see \cite[p.~3494, Proposition 3.1]{BM}). In particular, we have
\begin{equation}\label{e11}
T_\rho {\mathcal M}^s_{Bflat}(E,\, D_\xi) \ =\ T_\rho {\mathcal R}.
\end{equation}
Now \eqref{e10} follows immediately from \eqref{e11} and \eqref{e7b}.

Therefore, we have proved the following:

\begin{proposition}\label{prop1}
For any $\rho\, \in\, {\mathcal R}$, the tangent space $T_\rho {\mathcal R}$ is canonically identified
with the quotient space
$$
\frac{{\rm kernel}(\widehat{d}^\rho_1)}{{\rm image}(\widehat{d}^\rho_0)}.
$$
\end{proposition}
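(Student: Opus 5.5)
The plan is to identify the basic cohomology $\ker(\widehat{d}^\rho_1)/{\rm image}(\widehat{d}^\rho_0)$ with the full de Rham cohomology $\ker(d^\rho_1)/{\rm image}(d^\rho_0)$ from \eqref{e7b}, via the natural map induced by the inclusion of the subcomplex \eqref{e9} into \eqref{ed2}. Canonicity is then automatic: the map is induced by an inclusion of complexes, and by the argument with $\Phi$ above it does not depend on the lift $\rho'$.

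The first step is to observe what the basic complex means. Since $i_\xi s=0$ and $i_\xi d^\rho s=0$, Cartan's formula for the flat connection gives $L^{\nabla}_\xi s = i_\xi d^\rho s + d^\rho i_\xi s = 0$. So basic forms are exactly the forms that are horizontal and invariant under the flat parallel transport along the Reeb flow. In degree zero this says $\widehat{d}^\rho_0 s$ is the full $d^\rho_0 s$, with $\nabla_\xi s=0$.

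The second step is to show injectivity of $H^1_{basic}\to H^1$. Suppose $\alpha$ is basic and $\alpha=d^\rho_0 f$. Then $\nabla_\xi f=i_\xi\alpha=0$, so $f$ is itself basic and $\alpha\in{\rm image}(\widehat{d}^\rho_0)$.

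The third step, and the main obstacle, is surjectivity: every class in $H^1(M,\,{\mathcal L}^\rho)$ must have a basic representative. Here I would use the paper's route. The partial connection $(E,\,D_\xi)$ determined by $\rho$ lies in ${\mathcal M}^s_{Bflat}(E,\,D_\xi)$. Its tangent space is the basic $H^1$, essentially by definition as the deformation complex of basic flat connections. By \cite[Theorem 1.1]{Ka} (or \cite[Proposition 3.1]{BM} in the quasi-regular case), ${\mathcal M}^s_{Bflat}(E,\,D_\xi)$ is open in $\mathcal R$, which gives \eqref{e11}.

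One must check that the identification of tangent spaces coming from the open inclusion agrees with the map induced by inclusion of complexes. I expect this to hold because both are defined by differentiating families of flat connections. Since the map is injective between spaces of equal dimension (they are tangent spaces of an open subset and of the ambient manifold), it is an isomorphism.

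If one wanted a direct argument instead, I would average a closed $1$-form over the closure of the Reeb flow, which is a torus of isometries, and use homotopy invariance to replace it with an $L_\xi$-invariant cohomologous form. One would then kill $i_\xi\alpha$, using that $\nabla_\xi(i_\xi\alpha)=i_\xi d^\rho\alpha=0$ and a Hodge-theoretic argument together with irreducibility; this is where the real analytic difficulty lies.
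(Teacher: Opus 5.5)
Your proposal is correct and takes essentially the same route as the paper. Both arguments rest on the openness of ${\mathcal M}^s_{Bflat}(E,\, D_\xi)$ in $\mathcal R$ (Kasuya's Theorem 1.1), together with identifying its tangent space at $\rho$ with ${\rm kernel}(\widehat{d}^\rho_1)/{\rm image}(\widehat{d}^\rho_0)$. Your elementary injectivity check for the map induced by the inclusion of the basic subcomplex, followed by the dimension count, is a useful sharpening: it shows that the canonical identification is this inclusion-induced map, and it removes the need to compare two identifications of tangent spaces, a point the paper leaves implicit.
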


\begin{remark}\label{re-r}
Although it was already mentioned above, a crucial property of ${\mathcal R}$ needs to be emphasized.
We first recall a result from \cite{BM}. Let $M$ be a compact connected quasi-regular Sasakian
manifold. We recall that this implies that every leaf of the Reeb foliation ${\mathcal F}_{\xi}$ is closed
(see \eqref{rf} for the Reeb foliation). Let $\mathbb F$ denote a leaf of Reeb foliation ${\mathcal F}_{\xi}$,
so $\mathbb F$ is diffeomorphic to the circle $S^1$. Let $G$ be a connected complex reductive group, and let
$$
\rho\ :\ \pi_1(M,\, x_0)\ \longrightarrow\ G
$$
be a homomorphism such that the image $\rho(\pi_1(M,\, x_0))\, \subset\, G$ is a Zariski dense subgroup
of $G$. Note that $\rho(\pi_1({\mathbb F}))$ gives a conjugacy class of cyclic subgroups of $G$, because
$\mathbb F$ is diffeomorphic to the circle $S^1$. Then the subgroup $\rho(\pi_1({\mathbb F}))$ is
of finite order (see \cite[p.~3494, Proposition 3.1]{BM}). This implies that the
conjugacy class of $\rho(\pi_1({\mathbb F}))$ remains rigid as $\rho$ moves over a family of
representations of $\pi_1(M,\, x_0)$ in $G$. It may be mentioned that \cite[Proposition 3.1]{BM} played
a key role in entire \cite{BM}.

The above result of \cite{BM} has been generalized in \cite{Ka} for arbitrary compact connected Sasakian
manifolds; see \cite[Theorem 1.1]{Ka}. In particular, \cite[Theorem 1.1]{Ka} says the following.
Let $\mathbb F$ be a closed leaf of the Reeb foliation ${\mathcal F}_{\xi}$ of
a compact connected Sasakian manifold $M$; note that we are not assuming that $M$ is
quasi-regular, so not every leaf of the Reeb foliation ${\mathcal F}_{\xi}$ is required to be closed.
Then in any connected family of reductive representations $\rho$ of $\pi(M,\, x_0)$ in $\text{SL}(r, {\mathbb C})$,
the conjugacy class of the subgroup $\rho(\pi_1({\mathbb F})$ of $\text{SL}(r, {\mathbb C})$ remains
rigid as $\rho$ moves over the connected family of reductive representations.
\end{remark}
 
\section{Construction of a two-form}

We will construct a holomorphic $2$--form on the character variety $\mathcal R$ in
\eqref{e2}. This will be done using Proposition \ref{prop1}.

As before, take $\rho\, \in\, {\mathcal R}$. Consider the trace map
$$
sl(r, {\mathbb C})\otimes sl(r, {\mathbb C}) \ \longrightarrow\ {\mathbb C},\ \ \,
A_1\otimes A_2\, \longmapsto\, \text{trace}(A_1A_2).
$$
Since this map is preserved by the adjoint action of ${\rm SL}(r, {\mathbb C})$ on its
Lie algebra $sl(r, {\mathbb C})$, it produces a bilinear map
$$
{\rm trace}\ :\ C^\infty(M;\, {\mathcal E}^{\rho'}\otimes \bigwedge\nolimits^i T^*M)\otimes
C^\infty(M;\, {\mathcal E}^{\rho'}\otimes \bigwedge\nolimits^j T^*M)\ \longrightarrow\
C^\infty(M;\, \bigwedge\nolimits^{i+j} T^*M),
$$
where ${\mathcal E}^{\rho'}$ is the vector bundle in \eqref{e4}. Take
\begin{equation}\label{e12}
(\beta,\, \gamma)\ \in\ 
C^\infty(M;\, {\mathcal E}^{\rho'}\otimes T^*M)\oplus
C^\infty(M;\, {\mathcal E}^{\rho'}\otimes T^*M).
\end{equation}
Define
\begin{equation}\label{e13}
B(\beta,\,\gamma) \ :=\ \int_M \text{trace}(\beta\bigwedge\gamma)\bigwedge\eta\ \in \ {\mathbb C},
\end{equation}
where $\eta$ is the $1$--form in Definition \ref{def1}; note that $\text{trace}(\beta\bigwedge\gamma)
\bigwedge\eta$ is a $3$--form on the compact oriented $3$--manifold $M$.

\begin{proposition}\label{prop2}
\mbox{}
\begin{enumerate}
\item If $\beta\, \in\, {\rm image}(\widehat{d}^\rho_0)$ and $\gamma\, \in\, {\rm kernel}
(\widehat{d}^\rho_1)$ (see \eqref{ew}), then 
$$
B(\beta,\,\gamma) \ =\ 0
$$
(see \eqref{e13}).

\item If $\beta\, \in\, {\rm kernel}(\widehat{d}^\rho_1)$ and $\gamma\, \in\, 
{\rm image}(\widehat{d}^\rho_0)$, then 
$$
B(\beta,\,\gamma) \ =\ 0.
$$
\end{enumerate}
\end{proposition}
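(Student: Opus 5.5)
Write $\beta = \widehat{d}^\rho_0(s) = \nabla^{\rho'} s$ with $s\in C^\infty(M;\,{\mathcal E}^{\rho'})_\xi$, and let $\gamma$ be basic with $\widehat{d}^\rho_1\gamma=0$. The plan is to integrate by parts (Stokes' theorem on the closed $3$--manifold $M$) and use flatness together with the basic conditions to kill every term. Because the trace pairing is invariant under the adjoint action, and $\nabla^{\rho'}$ is induced from the trivial connection, we have the Leibniz rule
$$
d\,\text{trace}(s\gamma\wedge\eta)\ =\ \text{trace}(d^\rho_0 s\wedge\gamma)\wedge\eta + \text{trace}(s\, d^\rho_1\gamma)\wedge\eta - \text{trace}(s\gamma)\wedge d\eta .
$$
Here $\text{trace}(s\gamma)$ denotes the scalar $1$--form obtained by pairing $s$ with $\gamma$, and the signs come from $s$ having degree $0$ and $\gamma$ degree $1$. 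Integrating over $M$, the left side vanishes by Stokes, and the middle term vanishes because $d^\rho_1\gamma=0$. This gives
$$
B(\beta,\gamma)\ =\ \int_M \text{trace}(s\gamma)\wedge d\eta .
$$

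The key step is to show that this last integral vanishes. The integrand is a $3$--form, and we contract it with the Reeb field $\xi$. We have $i_\xi d\eta = 0$ by the definition of the Reeb vector field, and $i_\xi\gamma=0$ because $\gamma$ is basic. Hence $i_\xi\bigl(\text{trace}(s\gamma)\wedge d\eta\bigr)=0$. A $3$--form on a $3$--manifold whose contraction with the nowhere-vanishing vector field $\xi$ is zero must itself be zero. So the integrand vanishes pointwise and $B(\beta,\gamma)=0$, which proves (1).

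For (2), take $\beta$ basic and $d^\rho_1$--closed, and $\gamma=\nabla^{\rho'}s$. Since $\beta$ and $\gamma$ are both $1$--forms, $\text{trace}(\beta\wedge\gamma) = -\text{trace}(\gamma\wedge\beta)$ by the symmetry of the trace pairing. Thus $B(\beta,\gamma)=-B(\gamma,\beta)$, and (2) reduces to (1) with the roles of $\beta$ and $\gamma$ swapped.

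The main point needing care is the Leibniz formula for the trace pairing. It rests on the fact that $\text{trace}$ is a parallel bilinear form for $\nabla^{\rho'}$, which can be verified on the universal cover $\widetilde M$, where the connection is trivial and $\text{trace}$ is constant. The remaining sign conventions do not affect the argument, since every term other than the $d\eta$ term vanishes outright. Notably, only the basic condition $i_\xi\gamma=0$ is used, not $i_\xi d\gamma=0$; the closedness of $\gamma$ is used only to drop the middle term.
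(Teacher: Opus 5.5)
Your proof is correct and follows the paper's argument essentially step for step. You integrate by parts on the closed manifold $M$, drop the $d^\rho_1\gamma$ term by closedness, show that $\text{trace}(s\gamma)\wedge d\eta$ vanishes pointwise because its contraction with the nowhere-vanishing $\xi$ is zero (using $i_\xi\gamma=0$ and $i_\xi d\eta=0$), and deduce (2) from the skew-symmetry of $B$; your Leibniz-rule signs are also right, whereas the paper's intermediate display carries a harmless overall sign slip.
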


\begin{proof}
For the vector field $\xi$ in Definition \ref{def2}, we have
\begin{equation}\label{e14}
i_\xi d\eta\ =\ 0.
\end{equation}
To see this, note that $L_\xi \eta \,=\, 0$. Hence
$$
0 \,=\, i_\xi d\eta + d i_\xi \eta \,=\ i_\xi d\eta + d(1) \,=\, i_\xi d\eta.
$$

Assume that $\beta\,=\, \widehat{d}^\rho_0 (s)$ for some $s\, \in\,
C^\infty(M;\, {\mathcal E}^{\rho'})_\xi$. Also, assume that $\gamma\, \in\, {\rm kernel}
(\widehat{d}^\rho_1)$. Now,
$$
B(\beta,\,\gamma) \ :=\ \int_M \text{trace}(\beta\bigwedge\gamma)\bigwedge\eta
\ =\ \int_M \text{trace}(\widehat{d}^\rho_0 (s)\bigwedge\gamma)\bigwedge\eta
$$
$$
=\ \int_M \text{trace}(s\bigwedge d^\rho_2 (\gamma\bigwedge\eta))
$$
($d^\rho_2$ is the operator in \eqref{ed}), because $\text{trace}(d^\rho_2(s\bigwedge \gamma\bigwedge\eta))$
is an exact $3$--form on $M$, and hence
$$
\int_M \text{trace}(d^\rho_2(s\bigwedge \gamma\bigwedge\eta)) \ = \ 0.
$$
Consequently, we have
\begin{equation}\label{e15}
B(\beta,\,\gamma) \ =\ \int_M \text{trace}(s\bigwedge d^\rho_1 (\gamma)\bigwedge\eta)
- \int_M \text{trace}(s\bigwedge \gamma\bigwedge d(\eta))
\end{equation}
$$
=\ 
- \int_M \text{trace}(s\bigwedge \gamma\bigwedge d(\eta)),
$$
because $\gamma\, \in\, {\rm kernel}(\widehat{d}^\rho_1)$.

We have $i_\xi \gamma\,=\, 0$ because $\gamma\, \in\,
C^\infty(M;\, {\mathcal E}^{\rho'}\otimes T^*M)_\xi$. Combining this with \eqref{e14} we conclude that
$$
i_\xi (\gamma\bigwedge d(\eta)) \ = \ 0.
$$
This implies that
$$
i_\xi (\text{trace}(s\bigwedge \gamma\bigwedge d(\eta))) \ = \ 0.
$$
Hence we have $\text{trace}(s\bigwedge \gamma\bigwedge d(\eta))\,=\, 0$ because $\xi$ is a nowhere
vanishing vector field. In particular,
$$
\int_M \text{trace}(s\bigwedge \gamma\bigwedge d(\eta)) \ = 0.
$$
Now from \eqref{e15} it follows that $B(\beta,\,\gamma) \ =\ 0$. This proves the first statement of
the proposition.

To prove the second statement of the proposition, note that
$$
\int_M \text{trace}(\beta\bigwedge\gamma)\bigwedge\eta \ =\ -
\int_M \text{trace}(\gamma\bigwedge\beta)\bigwedge\eta .
$$
Hence the second statement follows from the first statement.
\end{proof}

\begin{theorem}\label{thm1}
The construction in \eqref{e13} produces a holomorphic $2$--form
$$
{\mathcal B}\ \in\ H^0({\mathcal R},\, \Omega^2_{\mathcal R})
$$
on the character variety $\mathcal R$ in \eqref{e2}.
\end{theorem}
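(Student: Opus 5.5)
The plan is to define ${\mathcal B}$ pointwise with the help of Proposition \ref{prop1}, and then to check that it depends holomorphically on $\rho$.

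\textbf{Step 1: pointwise definition.} Fix $\rho\in{\mathcal R}$ and a lift $\rho'$ as in \eqref{e3b}. By Proposition \ref{prop1},
$$
T_\rho{\mathcal R}\ =\ {\rm kernel}(\widehat{d}^\rho_1)/{\rm image}(\widehat{d}^\rho_0).
$$
Restrict the pairing $B$ of \eqref{e13} to ${\rm kernel}(\widehat{d}^\rho_1)\times {\rm kernel}(\widehat{d}^\rho_1)$. Since $\widehat{d}^\rho_1\circ\widehat{d}^\rho_0=0$, the image of $\widehat{d}^\rho_0$ lies in the kernel of $\widehat{d}^\rho_1$. Proposition \ref{prop2}(1) and (2) then say that $B$ vanishes whenever either argument lies in ${\rm image}(\widehat{d}^\rho_0)$. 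Because $B$ is bilinear, it therefore descends to a bilinear form ${\mathcal B}_\rho$ on $T_\rho{\mathcal R}$.

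This form is skew-symmetric. Indeed, $\beta\wedge\gamma$ for ${\mathcal E}^{\rho'}$-valued $1$-forms satisfies ${\rm trace}(\beta\wedge\gamma)=-{\rm trace}(\gamma\wedge\beta)$, using ${\rm trace}(A_1A_2)={\rm trace}(A_2A_1)$ together with the sign coming from swapping two $1$-forms. This identity was already used in the proof of Proposition \ref{prop2}. So ${\mathcal B}_\rho\in\bigwedge^2 T^*_\rho{\mathcal R}$, and it is complex bilinear because ${\rm trace}$ is.

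\textbf{Step 2: independence of the lift.} Replacing $\rho'$ by $A^{-1}\rho'A$ changes everything by the isomorphism $\widetilde\Phi$ of flat bundles constructed earlier. That isomorphism intertwines the operators $d^\rho_i$, preserves the basic subcomplex \eqref{e9}, and preserves the trace pairing because ${\rm trace}$ is ${\rm Ad}$-invariant. Hence ${\mathcal B}_\rho$ is well defined.

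\textbf{Step 3: holomorphic dependence on $\rho$ (the main obstacle).} I would use the more concrete group-cohomology model of the tangent space. Via the identification \eqref{e6}, a tangent vector at $\rho'$ in ${\rm Hom}(\pi_1(M,x_0),{\rm SL}(r,{\mathbb C}))^{ir}$ is a cocycle $u:\pi_1\to sl(r,{\mathbb C})$. The de Rham representative of $u$ is obtained as follows. Fix once and for all a smooth ${\mathbb C}$-valued partition-of-unity type construction on $\widetilde M$, built from a fundamental domain, which gives a map $f:\widetilde M\to sl(r,{\mathbb C})$ satisfying the equivariance $f(\gamma z)={\rm Ad}(\rho'(\gamma))f(z)+u(\gamma)$. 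The $1$-form $df$ then descends to a closed ${\mathcal E}^{\rho'}$-valued $1$-form, and this form depends polynomially on $(\rho',u)$.

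Proposition \ref{prop1} and \eqref{e10} show that the inclusion of the basic complex induces an isomorphism on $H^1$. Since ${\mathcal B}$ is defined on cohomology classes, I would like to evaluate it on arbitrary closed representatives rather than basic ones. This requires checking that $B$ kills exact forms paired against \emph{closed} (not necessarily basic) forms. The computation in Proposition \ref{prop2} gives
$$
B(d^\rho_0 s,\gamma)\ =\ -\int_M {\rm trace}(s\,\gamma)\wedge d\eta .
$$
This integral need not vanish for non-basic $\gamma$, and this is the delicate point. To handle it, I would write the basic representative as $df+d^\rho_0 s$ and note that $s$ is obtained by solving a leafwise equation along $\xi$. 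The solution $s$ depends holomorphically on the parameters, since it is the inverse of an elliptic-along-leaves operator, or can be produced by averaging over the torus closure of the Reeb flow. With that, ${\mathcal B}(u,v)=B(df_u+d^\rho_0 s_u,\ df_v+d^\rho_0 s_v)$ is a holomorphic function of $(\rho',u,v)$ on the smooth variety of irreducible representations. It is ${\rm SL}(r,{\mathbb C})$-invariant and vanishes on orbit directions, so it descends to a holomorphic section of $\Omega^2_{\mathcal R}$. Algebraicity would follow in the same way from the polynomial dependence.
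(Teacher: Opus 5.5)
Your Steps 1 and 2 are exactly the paper's proof. Proposition \ref{prop2} makes $B$ descend to a skew form on ${\rm kernel}(\widehat{d}^\rho_1)/{\rm image}(\widehat{d}^\rho_0)$, Proposition \ref{prop1} transports it to $T_\rho{\mathcal R}$, and independence of the lift was already checked in Section 3. The paper then simply declares the result ``evidently holomorphic (as all the constructions are in the holomorphic category)''. Your Step 3 tries to supply that missing argument, and as written it has a gap.

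You correctly isolate the real subtlety. For closed non-basic $\gamma$ one has $\gamma\wedge d\eta=(i_\xi\gamma)\,\eta\wedge d\eta$, so $B(d^\rho_0 s,\gamma)=\pm\int_M{\rm trace}(s\, i_\xi\gamma)\,\eta\wedge d\eta$, which is generally nonzero. Hence ${\mathcal B}$ must be evaluated on basic representatives, and the correction $s_u$ making $df_u+d^\rho_0 s_u$ basic cannot be skipped. But your claim that $s_u$ depends holomorphically on $(\rho',u)$ is unsupported:
\begin{enumerate}
\item The equation is $\nabla_\xi s_u=-i_\xi df_u$, and $\nabla_\xi$ has the infinite-dimensional kernel $C^\infty(M;{\mathcal E}^{\rho'})_\xi$. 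So there is no ``inverse'': $s_u$ is determined only modulo this kernel, and any normalization (say $L^2$-orthogonality, which needs a Hermitian metric on ${\mathcal E}^{\rho'}$) has no reason to vary holomorphically in $\rho'$.
\item Leafwise ellipticity gives no global solution operator. When $M$ is not quasi-regular the leaves are dense in tori and the equation is a small-divisor problem.
\item Averaging over the torus closure of the Reeb flow produces the projection onto $\ker\nabla_\xi$, not a solution of $\nabla_\xi s=g$. It also presupposes that the torus action lifts to ${\mathcal E}^{\rho'}$, which is essentially the rigidity recalled in Remark \ref{re-r}.
\item For the same reason, algebraicity does not follow ``from polynomial dependence'' once $s_u$ enters.
\end{enumerate}

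The correction step can be avoided altogether. In the setting of Section 5 the partial connection $D_\xi$ is held fixed. Then the derivative $\dot A$ of any family of flat connections in ${\mathcal C}_0$ satisfies $i_\xi\dot A=0$ and is $d^\rho_1$-closed, so it is automatically basic. Consequently ${\mathcal B}$ pulls back to the restriction of a constant ${\mathbb C}$-bilinear form; this is the content of \eqref{e25}. A simpler route: ${\mathcal B}_\rho$ is ${\mathbb C}$-bilinear, so ${\mathcal B}$ is of type $(2,0)$. Once it is known to be smooth and closed (Theorem \ref{thm2}, whose proof via \eqref{e25} does not use holomorphicity), the $(2,1)$-component $\overline{\partial}{\mathcal B}$ of $d{\mathcal B}=0$ vanishes, so ${\mathcal B}$ is holomorphic.
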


\begin{proof}
{}From Proposition \ref{prop2} it follows immediately that $B$ in \eqref{e13} produces a skew-symmetric
bilinear form
$$
\bigwedge\nolimits^2 \left(\frac{{\rm kernel}(\widehat{d}^\rho_1)}{{\rm image}(\widehat{d}^\rho_0)}\right)
\ \longrightarrow\ {\mathbb C}.
$$
Using the isomorphism in Proposition \ref{prop1} this produces a skew-symmetric
bilinear form
$$
\bigwedge\nolimits^2 T_\rho {\mathcal R}\ \longrightarrow\ {\mathbb C}.
$$
The resulting $2$--form on $\mathcal R$ is evidently holomorphic (as all the constructions are
in the holomorphic category).
\end{proof}

\section{Closedness of two-form}

We will prove that the $2$--form ${\mathcal B}$ in Theorem \ref{thm1} is closed. This will be 
done using a method of Atiyah and Bott developed in \cite{AB}.

Fix a $C^\infty$ complex vector bundle $E$ of rank $r$ on $M$ such that
the line bundle $\bigwedge^r E$ is trivializable.
Fix a $C^\infty$ trivialization of the complex line bundle $\bigwedge^r E$. Let
\begin{equation}\label{a2}
D_\xi
\end{equation}
be a 
partial connection on $E$ along the Reeb foliation ${\mathcal F}_{\xi}\,=\, {\mathbb C}\cdot\xi$
such that the partial connection on $\bigwedge^r E$ induced by $D_\xi$ has the following property:
The above chosen trivialization of $\bigwedge^r E$ is flat with respect to this partial
connection on $\bigwedge^r E$.

We assume that the partial connection $D_\xi$ on $E$ in \eqref{a2}
extends to a flat connection on $E$ over entire $M$.

Let
\begin{equation}\label{e16b}
\text{ad}(E)\ \subset\ \text{End}(E)\ =\ E\otimes E^*
\end{equation}
be the subbundle of co-rank one given by the endomorphisms of trace zero. The partial connection
$D_\xi$ on $E$ along ${\mathcal F}_\xi$ (see \eqref{a2}) induces a partial connection on
the vector bundle $\text{End}(E)$. This
induced partial connection on $\text{End}(E)$ preserves the subbundle $\text{ad}(E)$ in \eqref{e16b}.
Consequently, $\text{ad}(E)$ is equipped with a partial connection along ${\mathcal F}_\xi$.

Note that the normal bundle $TM/T{\mathcal F}_\xi$ of the foliation ${\mathcal F}_\xi$ is identified
with the subbundle $S\, \subset\, TM$ (see Definition \ref{def2}). In other words, the following
composition of homomorphisms is an isomorphism:
\begin{equation}\label{a1}
S \ \hookrightarrow\ TM \ \longrightarrow\ TM/T{\mathcal F}_\xi ,
\end{equation}
where $TM\, \longrightarrow\, TM/T{\mathcal F}_\xi$ is the quotient map. The normal bundle 
$TM/T{\mathcal F}_\xi$ is equipped with a natural flat partial connection along ${\mathcal F}_\xi$ 
(it is known as the Bott partial connection); this Bott partial connection is constructed as follows.
Let $\widetilde{TM}$ (respectively, $\widetilde{T\mathcal F}_\xi$) denote the sheaf of $C^\infty$
sections of $TM$ (respectively, $T{\mathcal F}_\xi$). Consider the Lie bracket operation on the
sheaf of $C^\infty$ vector fields on $M$, and let
$$
\widetilde{T\mathcal F}_\xi \otimes_{\mathbb R} \widetilde{TM}\ \longrightarrow\ \widetilde{TM}
$$
be the restriction of this operation to $\widetilde{T\mathcal F}_\xi \otimes_{\mathbb R} \widetilde{TM}$.
It induces a homomorphism
$$
\zeta\ :\ \widetilde{T\mathcal F}_\xi \otimes_{\mathbb R} (\widetilde{TM}/\widetilde{T\mathcal F}_\xi)
\ \longrightarrow\ \widetilde{TM}/\widetilde{T\mathcal F}_\xi
$$
using the natural quotient map $\widetilde{TM}\, \longrightarrow\, 
\widetilde{TM}/\widetilde{T\mathcal F}_\xi$ and the fact that $\widetilde{T\mathcal F}_\xi$ is closed
under the Lie bracket operation of vector fields. Note that we have $\zeta((f\cdot v)\otimes w)\,=\,
f\zeta(v\otimes w)$ for all locally defined $c^\infty$ function $f$ on $M$ and all
locally defined smooth sections $v$ (respectively, $w$) of $T{\mathcal F}_\xi$ (respectively,
$(TM)/T{\mathcal F}_\xi$). Consequently, $\zeta$ produces a homomorphism
$$
\zeta'\ :\ \widetilde{TM}/\widetilde{T\mathcal F}_\xi\ \longrightarrow\
(\widetilde{TM}/\widetilde{T\mathcal F}_\xi)\otimes T{\mathcal F}^*_\xi.
$$
This $\zeta'$ is a flat partial connection on $TM/T{\mathcal F}_\xi$ along ${\mathcal F}_\xi$,
which is the Bott partial connection mentioned above.

Using the isomorphism of $TM/T{\mathcal F}_\xi$ with $S$ in
\eqref{a1}, this Bott partial connection on $TM/T{\mathcal F}_\xi$
produces a partial connection on $S$ along ${\mathcal 
F}_\xi$. This partial connection on $S$ induces a partial connection, along ${\mathcal 
F}_\xi$, on $\bigwedge^j S^*\, =\,\bigwedge^j (TM/T{\mathcal F}_\xi)^*$ for all $j\,\geq\, 1$.

Let
\begin{equation}\label{e16}
\mathcal C
\end{equation}
denote that space of all $C^\infty$ connections $\nabla$ on $E$ satisfying the following
four conditions:
\begin{enumerate}
\item The connection on $\bigwedge^r E$, induced by the connection $\nabla$ on $E$, preserves
the above chosen trivialization of $\bigwedge^r E$.

\item The restriction of $\nabla$ to the Reeb foliation ${\mathcal F}_{\xi}$ coincides with the
given partial connection $D_\xi$ in \eqref{a2}.

\item The curvature $\nabla^2 \, \in\, C^\infty(M;\, \text{ad}(E)\otimes \bigwedge\nolimits^2 T^*M)$
of $\nabla$ (see \eqref{e16b} for $\text{ad}(E)$) has the property that 
$$
i_\xi \nabla^2 \ =\ 0.
$$
Note that this condition implies that
\begin{equation}\label{e17}
\nabla^2 \, \in\, 
C^\infty(M;\, \text{ad}(E)\otimes \bigwedge\nolimits^2 (TM/T{\mathcal F}_\xi)^*)
\,=\, C^\infty(M;\, \text{ad}(E)\otimes \bigwedge\nolimits^2 S^*)
\end{equation}
(see \eqref{a1} for $TM/T{\mathcal F}_\xi$ and $S$).

\item The curvature section $\nabla^2$ in \eqref{e17} is flat with respect to the partial connection
along the foliation ${\mathcal F}_\xi$. Recall that both $\text{ad}(E)$ and $\bigwedge^2
(TM/T{\mathcal F}_\xi)^*$ are equipped with partial connections along ${\mathcal F}_\xi$, and hence
$\text{ad}(E)\otimes\bigwedge^2(TM/T{\mathcal F}_\xi)^*$ is equipped with a partial connection
along ${\mathcal F}_\xi$. The condition says that the section $\nabla^2$ is flat with
respect to this partial connection on $\text{ad}(E)\otimes\bigwedge^2(TM/T{\mathcal F}_\xi)^*$.
\end{enumerate}

The space $\mathcal C$ in \eqref{e16} is an affine space for a vector space. This will be explained below.

Consider the complex vector space $C^\infty(M;\, \text{ad}(E)\otimes T^*M)$. Let
\begin{equation}\label{e18}
{\mathcal V}\ \subset\ C^\infty(M;\, \text{ad}(E)\otimes T^*M)
\end{equation}
be the subspace consisting of all $w\, \in\, C^\infty(M;\, \text{ad}(E)\otimes T^*M)$
satisfying the following two conditions:
\begin{enumerate}
\item $i_\xi w\ =\ 0$. This condition implies that
\begin{equation}\label{e19}
w\ \in\ C^\infty(M;\, \text{ad}(E)\otimes (TM/T{\mathcal F}_\xi)^*).
\end{equation}

\item The section $w$ in \eqref{e19} is flat with respect to the partial connection along the
foliation ${\mathcal F}_\xi$. Recall that both $\text{ad}(E)$ and $(TM/T{\mathcal F}_\xi)^*$
are equipped with partial connections along ${\mathcal F}_\xi$, and hence
$\text{ad}(E)\otimes (TM/T{\mathcal F}_\xi)^*$ is equipped with a partial connection
along ${\mathcal F}_\xi$. The condition says that the section $w$ in \eqref{e19} is flat with
respect to this partial connection on $\text{ad}(E)\otimes (TM/T{\mathcal F}_\xi)^*$.
\end{enumerate}

It is now straightforward to check that $\mathcal C$ in \eqref{e16} is an affine space for
the vector space $\mathcal V$ constructed in \eqref{e18}.

\begin{remark}\label{ff}
It may be mentioned that in special cases it may happen that ${\mathcal V}\ =\ 0$. In that case,
$\mathcal C$ is a singleton set, if it is nonempty.
\end{remark}

The vector space $C^\infty(M;\, \text{ad}(E)\otimes T^*M)$ has following constant $2$--form:
Take $$\beta,\, \gamma\ \in\ C^\infty(M;\, \text{ad}(E)\otimes T^*M).$$
The $2$--form on $C^\infty(M;\, \text{ad}(E)\otimes T^*M)$ sends $\beta\bigwedge\gamma$ to
$$
\int_M \text{trace}(\beta\bigwedge\gamma)\bigwedge\eta,
$$
where $\eta$ is as in Definition \ref{def1}; note
that $\text{trace}(\beta\bigwedge\gamma)\bigwedge\eta$ is a $3$--form on $M$. Let
\begin{equation}\label{e20}
\Theta'\ \in\ \bigwedge\nolimits^2 {\mathcal V}^*
\end{equation}
be the restriction to the subspace $\mathcal V$ (see \eqref{e18}) of this $2$--form on
$C^\infty(M;\, \text{ad}(E)\otimes T^*M)$. Note that the $2$--form
on $C^\infty(M;\, \text{ad}(E)\otimes T^*M)$ is closed because it is a constant
form; hence the form $\Theta'$ on $\mathcal V$ is also closed. Since $\mathcal C$ in \eqref{e16}
is an affine space for
the vector space $\mathcal V$, the closed $2$--form $\Theta'$ in \eqref{e20} produces a closed
$2$--form on $\mathcal C$. Let
\begin{equation}\label{e21}
\Theta\ \, \in\ \, H^0({\mathcal C},\, \Omega^2_{\mathcal C})
\end{equation}
be the closed $2$--form on $\mathcal C$ given by $\Theta'$.

Let
\begin{equation}\label{e22}
{\mathcal C}_0\ \,\subset\ \, {\mathcal C}
\end{equation}
be the locus of irreducible flat connections. In other words, a connection $\nabla\, \in\, {\mathcal C}$ lies
in ${\mathcal C}_0$ if and only if the following two conditions hold:
\begin{enumerate}
\item The curvature of $\nabla$ vanishes identically, and

\item the flat connection $\nabla$ is irreducible, which means that no nonzero proper subbundle of
$E$ is preserved by the connection $\nabla$.
\end{enumerate}
Let
\begin{equation}\label{e23}
\Theta_0\ \, \in\ \, H^0({\mathcal C}_0,\, \Omega^2_{{\mathcal C}_0})
\end{equation}
be the restriction of the $2$--form $\Theta$ (see \eqref{e21}) to the subspace ${\mathcal C}_0$ in \eqref{e22}.
We note that the $2$--form $\Theta_0$ is closed because $\Theta$ is so.

Recall $\mathcal R$ constructed in \eqref{e2}. Consider the map
\begin{equation}\label{e24}
\Phi\ :\ {\mathcal C}_0\ \longrightarrow\ {\mathcal R}
\end{equation}
that sends an irreducible flat connection to its monodromy homomorphism. The map $\Phi$ is a surjective
submersion to a connected component of $\mathcal R$. Let
$$
{\mathcal R}'\ :=\ \Phi ({\mathcal C}_0) \ \subset\ {\mathcal R}
$$
be the connected component of $\mathcal R$ given by the image of the map $\Phi$ in \eqref{e24}.
Next note that
\begin{equation}\label{e25}
\Phi^*{\mathcal B}\ =\ \Theta_0,
\end{equation}
where $\mathcal B$ and $\Theta_0$ are the $2$--forms in Theorem \ref{thm1} and \eqref{e23} respectively.
Since $\Phi$ in \eqref{e24} is a surjective submersion to ${\mathcal R}'$, and the $2$--form $\Theta_0$
is closed, from \eqref{e25} it follows immediately that the $2$--form ${\mathcal B}\big\vert_{{\mathcal R}'}$
on ${\mathcal R}'$ is closed.

Replacing the pair $(E,\, D_\xi)$ (see \eqref{a2}) with other pairs satisfying the same conditions, we can
cover all connected components of $\mathcal R$. Therefore, we have proved the following:

\begin{theorem}\label{thm2}
The $2$--form $\mathcal B$ in Theorem \ref{thm1} is closed.
\end{theorem}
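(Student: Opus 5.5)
The plan is to run the Atiyah--Bott argument sketched just before the statement, filling in the steps that the text treats as routine. First, for one pair $(E,\,D_\xi)$ as in \eqref{a2} whose partial connection extends to a flat connection, I will check that $\mathcal C$ in \eqref{e16} is an affine space over $\mathcal V$ in \eqref{e18}.

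The key point here is the following. For $\nabla\in\mathcal C$ and $w\in\mathcal V$, the curvature is
$$(\nabla+w)^2\,=\,\nabla^2+d^\nabla w+w\wedge w.$$
Since $i_\xi w\,=\,0$, the term $w\wedge w$ is basic. The condition that $w$ is flat along ${\mathcal F}_\xi$ for the tensor product of $D_\xi$ with the Bott partial connection is exactly the statement that $i_\xi d^\nabla w\,=\,0$. The flatness of the new curvature along ${\mathcal F}_\xi$ then follows from the Bianchi identity. Once this is in place, the constant $2$--form $(\beta,\,\gamma)\longmapsto\int_M{\rm trace}(\beta\wedge\gamma)\wedge\eta$ on $\mathcal V$ is translation invariant. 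Hence it defines the closed form $\Theta$ on $\mathcal C$, and its restriction $\Theta_0$ to ${\mathcal C}_0$ in \eqref{e22} is closed, because pullback commutes with $d$.

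Second, I will establish that the monodromy map $\Phi$ in \eqref{e24} is a surjective submersion onto a union of components ${\mathcal R}'$, and that $\Phi^*{\mathcal B}\,=\,\Theta_0$.

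\begin{itemize}
\item \emph{Tangent space of ${\mathcal C}_0$.} At a flat irreducible $\nabla$, the tangent space to ${\mathcal C}_0$ consists of those $w\in\mathcal V$ with $d^\nabla w\,=\,0$, i.e.\ of ${\rm kernel}(\widehat d^\rho_1)$.
\item \emph{Differential of $\Phi$.} The differential $d\Phi$ is the quotient map onto ${\rm kernel}(\widehat d^\rho_1)/{\rm image}(\widehat d^\rho_0)$. Indeed, the fibres of $\Phi$ are the orbits of gauge transformations preserving $D_\xi$, whose infinitesimal generators are the sections $s$ with $i_\xi d^\nabla s\,=\,0$, i.e.\ exactly $C^\infty(M;\,{\mathcal E}^{\rho'})_\xi$.
\item \emph{Submersion and open image.} Proposition \ref{prop1} then shows that $d\Phi$ is surjective. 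So $\Phi$ is a submersion with open image.
\item \emph{Closed image.} Here I would invoke Remark \ref{re-r} and \cite[Theorem 1.1]{Ka}. Along a connected family of irreducible representations, the restriction to the Reeb leaves is rigid up to conjugation. Every nearby representation, and every limit within the component, is therefore realized by a flat connection on $E$ restricting to (a gauge transform of) $D_\xi$. Hence the image is also closed in its component.
\item \emph{Pullback formula.} The identity $\Phi^*{\mathcal B}\,=\,\Theta_0$ is then tautological: both forms evaluate $\int_M{\rm trace}(\beta\wedge\gamma)\wedge\eta$ on the representatives $\beta,\,\gamma\in{\rm kernel}(\widehat d^\rho_1)$. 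This is well defined by Proposition \ref{prop2}.
\end{itemize}

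Third, given all this, closedness of ${\mathcal B}$ on ${\mathcal R}'$ follows formally:
$$\Phi^*(d{\mathcal B})\,=\,d\Theta_0\,=\,0,$$
and $\Phi^*$ is injective on forms because $\Phi$ is a surjective submersion. To finish, I will cover every component of $\mathcal R$. Given $\rho\in\mathcal R$, take $E\,=\,{\mathcal E}$ to be the flat bundle associated to $\rho$ via the standard representation on ${\mathbb C}^r$, with $\bigwedge^rE$ trivialized by the flat determinant. Take $D_\xi$ to be the restriction of its flat connection. This pair satisfies the hypotheses, and its ${\mathcal R}'$ contains $\rho$.

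The main obstacle is the second step: the identification of the gauge orbits with the fibres of $\Phi$, and the claim that $\Phi$ maps onto a whole component rather than just an open piece. The first is a slice/Kuranishi-type argument in the Fréchet setting, restricted to transversally basic data. For the second I would lean entirely on the rigidity theorem \cite[Theorem 1.1]{Ka}. If surjectivity onto full components proves delicate, it suffices to note that the images for varying $(E,\,D_\xi)$ form an open cover of $\mathcal R$, since closedness is a local property.
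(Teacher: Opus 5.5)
Your proposal is correct and is essentially the paper's own Atiyah--Bott argument. The constant form gives a closed $\Theta$ on the affine space $\mathcal C$, which restricts to a closed $\Theta_0$ on ${\mathcal C}_0$, and closedness descends along the monodromy submersion $\Phi$ via $\Phi^*{\mathcal B}=\Theta_0$, with all components reached by varying $(E,\,D_\xi)$. Your additions (the Bianchi identity for the affine structure, identifying $d\Phi$ with the quotient map via gauge orbits and Proposition~\ref{prop1}, and the observation that an open cover by the images of $\Phi$ suffices because closedness is local) only make explicit what the paper asserts without proof; the last one also avoids needing the paper's unjustified claim that $\Phi$ maps onto a whole connected component.
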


\section{Nondegeneracy of two-form}

Consider the special unitary group $\text{SU}(r)\, \subset\, \text{SL}(r,{\mathbb C})$. As before,
given a homomorphism
$$
\rho\ :\ \pi_1(M,\, x_0)\ \longrightarrow\ \text{SU}(r),
$$
the standard action of $\text{SU}(r)$ on ${\mathbb C}^r$ produces an
action of $\pi_1(M,\, x_0)$ on ${\mathbb C}^r$; in other words, the action of any $z\,\in\,
\pi_1(M,\, x_0)$ on ${\mathbb C}^r$ coincides with the action of $\rho(z)$. The homomorphism
$\rho$ is called \textit{irreducible} (also called \textit{simple}) if no nonzero proper subspace
of ${\mathbb C}^r$ is preserved by the action of $\pi_1(M,\, x_0)$ on ${\mathbb C}^r$.

Let $$\text{Hom}(\pi_1(M,\, x_0), \, \text{SU}(r))^{ir}\, \subset\,
\text{Hom}(\pi_1(M,\, x_0), \, \text{SU}(r))$$ be the space of all irreducible homomorphisms from
$\pi_1(M,\, x_0)$ to $\text{SU}(r)$. The conjugation action of $\text{SU}(r)$ on
$\text{Hom}(\pi_1(M,\, x_0), \,
\text{SU}(r))$ (described in \eqref{c1}) preserves the space of irreducible
homomorphisms $\text{Hom}(\pi_1(M,\, x_0), \, \text{SU}(r))^{ir}$. Let
\begin{equation}\label{e26}
{\mathcal R}_U\ :=\ \text{Hom}(\pi_1(M,\, x_0), \, \text{SU}(r))/\text{SU}(r)
\end{equation}
be the quotient space for this action of $\text{SU}(r)$ on $\text{Hom}(\pi_1(M,\, x_0),
\, \text{SU}(r))^{ir}$. The inclusion map $\text{SU}(r)\, \hookrightarrow\,
\text{SL}(r, {\mathbb C})$ produces a map
\begin{equation}\label{e27}
\Psi\ :\ {\mathcal R}_U \ \longrightarrow\ {\mathcal R},
\end{equation}
where $\mathcal R$ is constructed in \eqref{e2}.

\begin{lemma}\label{lem2}
Consider the form $\mathcal B$ in Theorem \ref{thm1}.
The pullback $\Psi^*{\mathcal B}$, where $\Psi$ is the map in \eqref{e27}, is a closed $2$--form on
${\mathcal R}_U$.
\end{lemma}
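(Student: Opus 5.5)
The plan is to deduce the lemma formally from Theorem \ref{thm2}, using only the naturality of the exterior derivative under smooth maps. First I will fix the setting. As in the definition, ${\mathcal R}_U$ is taken to be the quotient of the irreducible locus $\text{Hom}(\pi_1(M,\, x_0), \, \text{SU}(r))^{ir}$ by $\text{SU}(r)$. An irreducible unitary representation has centralizer equal to the center of $\text{SU}(r)$, so the action of $\text{SU}(r)/Z$ is free on this locus. By the same deformation-theoretic description as in \eqref{e6}, now with real coefficients $su(r)$, ${\mathcal R}_U$ is then a smooth real manifold. Its tangent space at $\rho$ is $H^1(M,\, {\mathcal L}^\rho_{\mathbb R})$, where ${\mathcal L}^\rho_{\mathbb R}$ is the local system of flat sections of $su(r)$.

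Next I will check that $\Psi$ in \eqref{e27} is a smooth map. The inclusion $\text{SU}(r)\hookrightarrow \text{SL}(r,{\mathbb C})$ induces a smooth $\text{SU}(r)$-equivariant map
$$\text{Hom}(\pi_1(M,\, x_0), \, \text{SU}(r))^{ir}\,\longrightarrow\, \text{Hom}(\pi_1(M,\, x_0), \, \text{SL}(r,{\mathbb C}))^{ir}.$$
This map lands in the irreducible locus, because irreducibility is defined through invariant subspaces of ${\mathbb C}^r$ and so does not depend on which group we regard the representation as taking values in. Composing with the holomorphic quotient map to $\mathcal R$ gives an $\text{SU}(r)$-invariant smooth map, and this descends to the smooth quotient ${\mathcal R}_U$. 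At the level of tangent spaces, $d\Psi$ is the map $H^1(M,{\mathcal L}^\rho_{\mathbb R})\to H^1(M,{\mathcal L}^\rho)$ induced by the inclusion $su(r)\subset sl(r,{\mathbb C})$.

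Finally I will regard $\mathcal B$ as a complex-valued smooth $2$-form on the underlying real manifold of $\mathcal R$. A holomorphic $2$-form is in particular a smooth complex $2$-form, and holomorphic closedness $d{\mathcal B}=0$ (Theorem \ref{thm2}) is the same as closedness of this smooth form, since $\bar\partial{\mathcal B}=0$ already. Naturality of $d$ then gives
$$d(\Psi^*{\mathcal B})\,=\,\Psi^*(d{\mathcal B})\,=\,0.$$
Hence $\Psi^*{\mathcal B}$ is a closed complex-valued $2$-form on ${\mathcal R}_U$.

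The only step needing care is the smoothness of ${\mathcal R}_U$ and of $\Psi$. That is where the irreducibility hypothesis (freeness of the action modulo the center) enters, and I would settle it by citing the same references \cite{Go}, \cite{AB} used for \eqref{e6}. If the statement intends $\Psi^*{\mathcal B}$ to be real-valued, one should additionally observe that for $\beta,\gamma$ with values in $su(r)$ the quantity $\text{trace}(\beta\wedge\gamma)$ is real. This holds because $\text{trace}(AB)$ is real for $A,B$ in $su(r)$ and $\eta$ is a real form. Realness does not affect closedness.
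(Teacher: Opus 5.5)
Your proposal is correct and follows the paper's argument: $\mathcal B$ is closed by Theorem \ref{thm2}, and naturality gives $d(\Psi^*\mathcal B)=\Psi^*(d\mathcal B)=0$. Your extra remarks are details the paper leaves implicit: smoothness of $\Psi$ and of the irreducible unitary quotient, holomorphic versus smooth closedness via $\bar\partial\mathcal B=0$, and realness of $\Psi^*\mathcal B$.
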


\begin{proof}
The $2$--form $\mathcal B$ is closed by Theorem \ref{thm2}. Hence its pullback
$\Psi^*{\mathcal B}$ is a closed $2$--form on ${\mathcal R}_U$.
\end{proof}

Fix a $C^\infty$ complex vector bundle on $M$ of rank $r$ equipped with a Hermitian structure $h$.
Assume that the line bundle $\bigwedge^rE$ has a trivialization $$\varphi\ :\ M\times {\mathbb C}\
\longrightarrow\ \bigwedge\nolimits^r E$$ such that $h(\varphi(M\times \{1\}))\,= \,1$. Assume that
$E$ has a partial connection $D_\xi$ along the foliation ${\mathcal F}_\xi$ satisfying the
following two conditions:
\begin{enumerate}
\item The Hermitian structure $h$ on $E$ is flat with respect to the partial connection on
$E^*\otimes \overline{E}^*$ induced by $D_\xi$.

\item The section $\varphi(M\times \{1\})$ of $\bigwedge^rE$ is flat with respect to the
partial connection on $\bigwedge^rE$ induced by $D_\xi$.
\end{enumerate}

Let
\begin{equation}\label{a0}
\text{su}(E)\ \subset\ \text{ad}(E)
\end{equation}
be the real subbundle given by the skew-symmetric endomorphisms (with respect to the
Hermitian structure $h$) of trace zero. So for any $x\, \in\, M$, an endomorphism
$A\, \in\, {\rm ad}(E)_x\, \subset\, \text{End}(E_x)$ lies in $\text{su}(E)_x$ if and
only if
$$
h(A(v),\, w) + h(v,\, A(w))\ = \ 0
$$
for all $v,\, w\, \in\, E_x$. Since $h$ is flat with respect to the partial connection
induced by $D_\xi$, it follows immediately that the partial connection on $\text{ad}(E)$ induced
by $D_\xi$ preserves the subbundle $\text{su}(E)$ in \eqref{a0}.

Let
\begin{equation}\label{e28}
{\mathcal C}_U
\end{equation}
denote that space of all $C^\infty$ connections $\nabla$ on $E$ satisfying the following
five conditions:
\begin{enumerate}
\item The Hermitian structure $h$ on $E$ is flat with respect to the connection on $E^*\otimes \overline{E}^*$
induced by $\nabla$.

\item For the connection on $\bigwedge^r E$ --- induced by the connection $\nabla$ on $E$ ---
the section $\varphi(M\times \{1\})$ of $\bigwedge^rE$ is flat.

\item The restriction of $\nabla$ to the Reeb foliation ${\mathcal F}_{\xi}$ coincides with the
given partial connection $D_\xi$ in \eqref{a2}.

\item The curvature $\nabla^2 \, \in\, C^\infty(M;\, \text{su}(E)\otimes \bigwedge\nolimits^2 T^*M)$
of $\nabla$ (see \eqref{e16b} for $\text{su}(E)$) has the property that 
$$
i_\xi \nabla^2 \ =\ 0.
$$
Note that this condition implies that
\begin{equation}\label{e29}
\nabla^2 \, \in\, 
C^\infty(M;\, \text{su}(E)\otimes \bigwedge\nolimits^2 (TM/T{\mathcal F}_\xi)^*)
\,=\, C^\infty(M;\, \text{su}(E)\otimes \bigwedge\nolimits^2 S^*)
\end{equation}
(see \eqref{a1} for $TM/T{\mathcal F}_\xi$ and $S$).

\item The curvature section $\nabla^2$ in \eqref{e29} is flat with respect to the partial connection
along the foliation ${\mathcal F}_\xi$. Recall that both $\text{su}(E)$ and $\bigwedge^2
(TM/T{\mathcal F}_\xi)^*$ are equipped with partial connections along ${\mathcal F}_\xi$, and hence
$\text{su}(E)\otimes\bigwedge^2(TM/T{\mathcal F}_\xi)^*$ is equipped with a partial connection
along ${\mathcal F}_\xi$. The condition says that the section $\nabla^2$ is flat with
respect to this partial connection on $\text{su}(E)\otimes\bigwedge^2(TM/T{\mathcal F}_\xi)^*$.
\end{enumerate}

Consider the real vector space $C^\infty(M;\, \text{su}(E)\otimes T^*M)$. Let
\begin{equation}\label{e30}
{\mathcal V}_U\ \subset\ C^\infty(M;\, \text{su}(E)\otimes T^*M)
\end{equation}
be the subspace consisting of all $w\, \in\, C^\infty(M;\, \text{su}(E)\otimes T^*M)$
satisfying the following two conditions:
\begin{enumerate}
\item $i_\xi w\ =\ 0$. Note that this condition implies that
\begin{equation}\label{e31}
w\ \in\ C^\infty(M;\, \text{su}(E)\otimes (TM/T{\mathcal F}_\xi)^*).
\end{equation}

\item The section $w$ in \eqref{e31} is flat with respect to the partial connection along the
foliation ${\mathcal F}_\xi$. Recall that both $\text{su}(E)$ and $(TM/T{\mathcal F}_\xi)^*$
are equipped
with partial connections along ${\mathcal F}_\xi$, and hence
$\text{su}(E)\otimes (TM/T{\mathcal F}_\xi)^*$ is equipped with a partial connection
along ${\mathcal F}_\xi$. The condition says that the section $w$ in \eqref{e31} is flat with
respect to this partial connection on $\text{su}(E)\otimes (TM/T{\mathcal F}_\xi)^*$.
\end{enumerate}

Note that ${\mathcal C}_U$ in \eqref{e28} is an affine space for
the real vector space ${\mathcal V}_U$ constructed in \eqref{e30}.

The vector space $C^\infty(M;\, \text{su}(E)\otimes T^*M)$ has following constant $2$--form:
Take $$\beta,\, \gamma\ \in\ C^\infty(M;\, \text{su}(E)\otimes T^*M).$$
The $2$--form on $C^\infty(M;\, \text{ad}(E)\otimes T^*M)$ sends $\beta\bigwedge\gamma$ to
$$
\int_M \text{trace}(\beta\bigwedge\gamma)\bigwedge\eta ,
$$
where $\eta$ is the $1$--form in Definition \ref{def1}. Restrict
this $2$--form to the subspace ${\mathcal V}_U$ in \eqref{e30}; this restriction
will be denoted by ${\mathcal H}_0$. This $2$--form ${\mathcal H}_0$ on ${\mathcal V}_U$
is actually nondegenerate. Indeed, for any $0\, \not=\, \beta\, \in\, {\mathcal V}_U$, we have
\begin{equation}\label{nd}
\int_M \text{trace}(\beta\bigwedge\overline{\beta})\bigwedge\eta\ <\ 0.
\end{equation}
The constant $2$--form ${\mathcal H}_0$ on ${\mathcal V}_U$ produces a $2$--form on
${\mathcal C}_U$, because ${\mathcal C}_U$ is an affine space for
the real vector space ${\mathcal V}_U$. Let
\begin{equation}\label{e32}
{\mathcal H} \ \in\, C^\infty({\mathcal C}_U,\, \bigwedge\nolimits^2 T^*{\mathcal C}_U)
\end{equation}
be the $2$--form on ${\mathcal C}_U$ given by ${\mathcal H}_0$.

Let
$${\mathcal G}_U \ \subset\ C^\infty(M;\, \text{End}(E))$$
be the space of all endomorphisms $A\, \in\, C^\infty(M;\, \text{End}(E))$
satisfying the following two conditions:
\begin{enumerate}
\item For every $x\, \in\, M$, the endomorphism $A(x)\,\in\, \text{End}(E_x)$ preserves the
Hermitian structure $h$; note that this implies that $A(x)$ is actually invertible.

\item $A$ is flat with respect to the partial connection on $\text{End}(E)$ induced
by the partial connection $D_\xi$ on $E$.
\end{enumerate}
Note that ${\mathcal G}_U$ is a group. The Lie algebra of ${\mathcal G}_U$ is
the subspace
$$
C^\infty(M;\, \text{su}(E))_\xi\ \subset\ C^\infty(M;\, \text{su}(E))
$$
consisting of all sections that are flat with respect to the partial connection on
$\text{su}(E)$ induced by $D_\xi$.

The group ${\mathcal G}_U$ has a natural action
on ${\mathcal C}_U$. This action of ${\mathcal G}_U$ on ${\mathcal C}_U$ evidently
preserves the $2$--form $\mathcal H$ \eqref{e32}.

Let
$$
{\mathcal W}\ \subset\ C^\infty(M;\, \text{su}(E)\otimes \bigwedge\nolimits^2 T^*M)
$$
be the subspace consisting of all $w\, \in\, C^\infty(M;\, \text{su}(E)
\otimes \bigwedge\nolimits^2 T^*M)$ satisfying the following two conditions:
\begin{enumerate}
\item $i_\xi w\ =\ 0$. Note that this condition implies that
$$
w\ \in\ C^\infty(M;\, \text{su}(E)\otimes \bigwedge\nolimits^2 (TM/T{\mathcal F}_\xi)^*).
$$

\item The above section $w$ is flat with respect to the partial connection on
$\text{su}(E)\otimes \bigwedge\nolimits^2 (TM/T{\mathcal F}_\xi)^*$ along the
foliation ${\mathcal F}_\xi$.
\end{enumerate}

Then we have
$$
C^\infty(M;\, \text{su}(E))^*_\xi\ =\ {\mathcal W}.
$$
The duality pairing is given by
$$
-\int_M \text{trace}(\beta\delta)\bigwedge\eta,
$$
where $\beta\, \in\, C^\infty(M;\, \text{su}(E))_\xi$, $\delta\, \in\,{\mathcal W}$ and
$\eta$ is the $1$--form in Definition \ref{def1}.

It is straightforward to check that the map
$$
{\mathcal C}_U\ \longrightarrow\ \text{Lie}({\mathcal G}_U)^*\ =\
C^\infty(M;\, \text{su}(E))^*_\xi \ =\ {\mathcal W}
$$
that sends a connection $\nabla\,\in\, {\mathcal C}_U$ to its curvature
$\nabla^2\, \in\, {\mathcal W}$ is the moment map for the above action of
${\mathcal G}_U$ on ${\mathcal C}_U$.

Now from the general principle of symplectic reduction (see \cite{AB}) it
follows that the $2$--form $\Psi^*{\mathcal B}$ on ${\mathcal R}_U$ (see Lemma
\ref{lem2}) is nondegenerate. Combining this with Lemma \ref{lem2} we have the following:

\begin{theorem}\label{thm3}
The $2$--form $\Psi^*{\mathcal B}$ on ${\mathcal R}_U$ is a symplectic form.
\end{theorem}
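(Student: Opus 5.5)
Closedness of $\Psi^*{\mathcal B}$ is already Lemma \ref{lem2}, so the task is nondegeneracy at each point $\rho\in{\mathcal R}_U$. I will do this directly, in the spirit of Atiyah--Bott, rather than rely only on the reduction principle.

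\textbf{Tangent space.} Fix $\rho$ and let $\nabla\in{\mathcal C}_U$ be the corresponding flat unitary connection. By the unitary analogue of Proposition \ref{prop1}, applied to the real subcomplex of \eqref{e9} with coefficients in $\text{su}(E)$, I identify $T_\rho{\mathcal R}_U$ with $\ker(\widehat d_1)/\text{image}(\widehat d_0)$, where now
\[
\widehat d_0 : C^\infty(M;\,\text{su}(E))_\xi \longrightarrow {\mathcal V}_U , \qquad
\widehat d_1 : {\mathcal V}_U \longrightarrow {\mathcal W} .
\]
The first is the infinitesimal ${\mathcal G}_U$--action and the second is the derivative of the moment map $\nabla\mapsto\nabla^2$. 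Under this identification, $\Psi^*{\mathcal B}$ is induced by ${\mathcal H}_0$, restricted to $\ker\widehat d_1$ and descended to the quotient. This is well defined by Proposition \ref{prop2}.

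\textbf{Hodge decomposition.} I will use the transverse (basic) Hodge theory of the Riemannian foliation ${\mathcal F}_\xi$ with the metric $g_\eta$. Here ${\mathcal V}_U$ consists of basic $\text{su}(E)$--valued $1$--forms, so there is a basic adjoint $\widehat d_0^*$. This gives an ${\mathcal H}$-orthogonal splitting
\[
\ker\widehat d_1 \;=\; \text{image}(\widehat d_0)\,\oplus\,{\mathbb H}, \qquad {\mathbb H}:=\ker\widehat d_1\cap\ker\widehat d_0^* .
\]
The space ${\mathbb H}$ is finite dimensional and maps isomorphically onto $T_\rho{\mathcal R}_U$. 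The transverse structure is Kähler and $d\eta$ is the transverse Kähler form. So the transverse Hodge star $\bar*$ on basic $1$--forms equals the transverse complex structure $I$ (up to sign). Moreover $\int_M\text{tr}(\beta\wedge\gamma)\wedge\eta$ equals, up to sign, the $L^2$ pairing $\langle\beta, I\gamma\rangle$.

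\textbf{Nondegeneracy.} The key point is that $I$ preserves ${\mathbb H}$. This is the transverse version of the fact that on a Kähler surface the harmonic forms $\ker d_\nabla\cap\ker d_\nabla^*$ are preserved by $*$. I would prove it from the Kähler identities for basic forms twisted by the flat unitary bundle: basic $\nabla$--harmonic $1$--forms split into $(1,0)$ and $(0,1)$ parts, each harmonic, and $I$ acts by $\pm\sqrt{-1}$ on them. Given this, for $0\neq\beta\in{\mathbb H}$ we have $I\beta\in{\mathbb H}$ and
\[
\Psi^*{\mathcal B}(\beta,I\beta) \;=\; \pm\|\beta\|^2_{L^2} \;\neq\; 0 ,
\]
which is essentially \eqref{nd}. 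This gives nondegeneracy.

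\textbf{Main obstacle.} The hardest step is establishing basic Hodge theory and the twisted Kähler identities on a possibly irregular Sasakian $3$--manifold. My first approach is to reduce to the quasi-regular case by the deformation recalled in Section 2. Alternatively, I would work directly with El Kacimi-Alaoui's transverse elliptic theory, which applies because $\xi$ is Killing.
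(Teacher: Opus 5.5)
Your proof is correct, but it takes a different route from the paper. The paper never chooses harmonic representatives. It views $\mathcal{C}_U$ as an affine space over $\mathcal{V}_U$ carrying the constant form $\mathcal{H}$, and asserts that $\mathcal{H}_0$ is nondegenerate on $\mathcal{V}_U$ by \eqref{nd}. It then observes that $\nabla\mapsto\nabla^2\in\mathcal{W}=\mathrm{Lie}(\mathcal{G}_U)^*$ is a moment map for the group $\mathcal{G}_U$ of $D_\xi$-flat unitary gauge transformations, and invokes the general principle of symplectic reduction from \cite{AB}.

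You work instead at a single tangent space. Basic Hodge theory gives $\ker\widehat{d}_1=\mathrm{image}(\widehat{d}_0)\oplus\mathbb{H}$. Since $I=\pm\bar{*}$ preserves $\mathbb{H}$ and $\mathcal{H}_0(\beta,I\beta)=\pm\|\beta\|^2_{L^2}$, the induced form on $T_\rho\mathcal{R}_U$ is nondegenerate.

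The two arguments are closely linked. In infinite dimensions, weak nondegeneracy of $\mathcal{H}_0$ plus a moment map does not by itself make the reduced form nondegenerate. One still has to show that the $\mathcal{H}$-annihilator of $\ker\widehat{d}_1$ meets $\ker\widehat{d}_1$ only in $\mathrm{image}(\widehat{d}_0)$. Your decomposition together with $I\mathbb{H}=\mathbb{H}$ is exactly that input, which the paper leaves implicit.

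Your version has two further advantages:
\begin{enumerate}
\item It pins down what \eqref{nd} must mean: the partner of $\beta$ is $I\beta$. Read literally, $\mathrm{trace}(\beta\wedge\overline{\beta})$ vanishes identically for $\mathrm{su}(E)$-valued $\beta$.
\item It avoids identifying $\mathcal{R}_U$ with the flat locus of $\mathcal{C}_U$ modulo $\mathcal{G}_U$.
\end{enumerate}
In exchange, the paper's formulation places nondegeneracy in the same gauge-theoretic picture used for closedness.

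Two minor corrections. First, the invariance $I\mathbb{H}=\mathbb{H}$ needs no Kähler identities; the right analogy is a Riemann surface, not a Kähler surface. Integrate by parts as in the proof of Proposition \ref{prop2}. The $d\eta$ term drops out because horizontal $3$-forms vanish, so on basic $1$-forms $\widehat{d}_0^{\,*}=\pm\bar{*}\,\widehat{d}_1\,\bar{*}$. Since $\bar{*}^2=-1$, the map $\bar{*}$ simply interchanges the two conditions defining $\mathbb{H}$.

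Second, discard the option of reducing to a quasi-regular structure by deforming the Reeb field. Both $\mathcal{B}$ and the basic complex depend on $(\eta,\xi)$, so nondegeneracy for $(\eta',\xi')$ does not a priori transfer to $\Psi^*\mathcal{B}$. El Kacimi-Alaoui's basic Hodge theory applies directly. The foliation $\mathcal{F}_\xi$ is Riemannian with totally geodesic leaves, and the fibre metric $-\mathrm{trace}(AB)$ on $\mathrm{su}(E)$ is parallel along the leaves because $\nabla$ is unitary.
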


\begin{remark}
We were compelled to restrict to the $\text{SU}(r)$ character variety (instead of
the $\text{SL}(r,{\mathbb C})$ character variety) because \eqref{nd} holds only if
$\text{SU}(r)$ is considered instead of $\text{SL}(r,{\mathbb C})$. It is conceivable
that nondegeneracy holds also for the $\text{SL}(r,{\mathbb C})$ character variety. We hope
to be able to address this question in future.
\end{remark}

\section{The two-form from Yang-Mills gauge theory on surfaces}

In this section we review, for reference, how the symplectic structure arises on the moduli space of flat 
connections over a compact oriented surface. So, instead of $M$, now we consider a compact, oriented 
2-manifold $\Sigma$ without boundary, of genus $g$ with $g\, \geq\, 2$, and consider a compact, connected Lie group $G$, whose 
Lie-algebra $LG$ is equipped with an Ad-invariant metric $\langle\cdot,\,\cdot\rangle$. For detailed results we 
refer to \cite{Sen93, Sen94, Sen95, Sen97, Sen97a, Sen97b, Se03, KS94a, KS94b, KS95, KS96}, including for 
results that apply to surfaces with boundary, non-trivial bundles, and also non-orientable surfaces.
There is a large literature in the area; we mention also \cite{Hu96} and
\cite{JeW00}, and other works by these authors, for more about the symplectic structure on the moduli space of flat connections.

Let ${\mathcal A}$ be the space of all connections on a principal $G$-bundle $\pi\,:\,P\,
\longrightarrow\, \Sigma$, and let ${\mathcal 
G}$ be the group of bundle automorphisms, meaning automorphisms of $P$ that commute with the
action of $G$ and induces the identity map $\Sigma$. There is a standard symplectic form (\cite{AB}, \cite{Go})
on the affine space 
${\mathcal A}$ given by
$$\omega_{AB}(A,B)\ :=\ \int_{\Sigma}\langle A\wedge B\rangle_{LG},$$
for all $LG$-valued $G$-equivariant $1$-forms on $P$ (the space of such $1$-forms is the tangent space to
the affine space ${\mathcal A}$). Then the action of ${\mathcal G}$ on ${\mathcal A}$ given by
$${\mathcal A}\times{\mathcal G}\, \longrightarrow\,{\mathcal A},\ \ \,
 (\omega,\, \phi)\,\longmapsto\,\phi^*\omega$$ is symplectic, with moment map given by
$J\,:\,\omega\,\longmapsto\, \Omega^\omega$, the curvature $2$-form of $\omega$ (the space of $LG$-valued
equivariant $2$-forms on $P$ can be taken to be the dual to the Lie algebra $L{\mathcal G}$).

To discuss the Yang-Mills measure, we also fix a Riemannian metric on $\Sigma$. The {\em Yang-Mills measure} $\mu_T$, 
where $T\,>\,0$, is a probability measure on a completion of ${\mathcal A}/{\mathcal G}$ given by the formal 
expression
$$d\mu_T([\omega])\ =\ \frac{1}{Z_T}e^{-\frac{S_{\rm YM}(\omega)}{T}}[D\omega],$$
where $[D\omega]$ is the ``pushforward'' of the ``Lebesgue measure'' $D\omega$ on ${\mathcal A}$ onto ${\mathcal A}/{\mathcal G}$ while $S_{\rm 
YM}(\omega)$ is the Yang-Mills action, half the $L^2$-norm-squared of the curvature form, using the Riemannian area measure on $\Sigma$, and $Z_T$ 
is a ``normalizing constant''. Note that the exponent is $-\frac{1}{2T}\|J(\omega)\|_{L^2}^2$. For convenience, assume that $G$ is simply connected, 
in which case flat connections exist on $P$. A formal calculation shows that $\mu_T\to\mu_0$, as $T\downarrow 0$, where $\mu_0$ is the normalized 
symplectic volume measure on $J^{-1}(0)/{\mathcal G}$, the moduli space of flat connections on ${\mathcal A}/{\mathcal G}$. Witten \cite{Wi91, Wi92} 
discovered this result. A mathematical proof, given in \cite{Se03} for $g\, \geq\, 1$, uses an explicit construction of the symplectic form on 
$J^{-1}(0)/{\mathcal G}$, in addition to a rigorous construction of the measure $\mu_T$ (different approaches to the construction of $\mu_T$ were 
developed by Fine \cite{Fi91, Fi96}, Sengupta \cite{Sen93, Sen97,Sen97b}, and L\'evy \cite{Le03, Le10}; ; for the volume of the moduli space of flat 
connections, see also Liu \cite{Li96} and, for a limiting result, Forman \cite{Fo93}).

Let $g$ denote the genus of $\Sigma$.
The moduli space of flat connections on $P\, \longrightarrow\, \Sigma$ can be identified concretely by
choosing standard generators $A_1,\, B_1,\,\cdots,\, A_g,\, B_g$ of $\pi_1(\Sigma,\,x_0)$ and using the
holonomy map
\[
{\mathcal A}\, \longrightarrow\, G^{2g},\ \ \,\omega\,\longmapsto\, \left(h_u(\omega; A_1), h_u(\omega; B_1),\ldots, h_u(\omega; A_g), h_u(\omega; B_g)\right).
\]
This map carries the subset of all flat connections to the set
\[ C^0\, =\, \{(a_1,\, b_1,\, \cdots,\, a_g,\, b_g)\, \in\,
G^{2g}\,\,\big\vert\,\, a_1b_1a_1^{-1}b_1^{-1}\ldots a_gb_ga_g^{-1}b_g^{-1}\,=\,e\}.\]
Passing to the quotient ${\mathcal A}/{\mathcal G}$ corresponds to the quotient $C^0/G$, where $G$ acts on
$C^0$ by conjugation in each component. For notational convenience, embed $G^{2g}$ into $G^{4g}$ by:
\[
(a_1,\,b_1,\,\cdots,\, a_g,\, b_1)\, \longmapsto\, (a_1,\, a_1^{-1},\, b_1,\, b_1^{-1},
\, \cdots,\, a_g,\, b_g,\, a_g^{-1},\, b_g^{-1}).
\]
The symplectic form then arises from the $2$-form $\Theta$ on $G^{4g}$ (pulled back to $G^{2g}$) given by
\[
\Theta_h(hX,\,hY)\,=\,\sum_{j,k=1}^{2g}\epsilon_{jk}\langle {\rm Ad}(h_{k-1}\ldots h_1)^{-1}X_k, {\rm Ad}(h_{k-1}\ldots h_1)^{-1}Y_k\rangle_{LG},
\]
where $\epsilon_{jk}\,=\,1$ if $j\,<\,k$, $\epsilon_{jk}\,=\,-1$ if $j\,>\,k$ and $\epsilon_{jk}
\,=\,0$ for $j\,=\,k$.

Now consider the case of a compact, oriented surface $\Sigma$ with boundary $\partial\Sigma$ consisting of $r$ boundary component
circles; then $\pi_1(\Sigma,\, o)$ is generated 
by loops $$A_1,\, B_1,\,\cdots,\, A_g,\, B_g,\, C_1,\, \cdots,\, C_r$$ subject to the relation
$$C_r\ldots C_1\overline{B}_g\overline{A}_gB_gA_g\ldots \overline{B}_1\overline{A}_1B_1A_1\ =\ 
I,$$
where the bar over a loop indicates the reverse loop. Let ${\Xi}\,=\,{\Xi}_1\times\ldots\times{\Xi}_r$ be a conjugacy class in $G^r$. We focus on flat connections
on $\Sigma$ with holonomy around the boundary component $C_i$ restricted to lie in the conjugacy class $C_i$; then instead of $G^{2g}$, we have
to work with ${G^{2g}\times{\Xi}}$, and the subset 
$(\Pi\big\vert_{G^{2g}\times{\Xi}})^{-1}(e)$, where $\Pi\,:\,G^{2g}\times G^r\,\longrightarrow\, G$ is constructed as follows:
\begin{equation}\label{eq:introdefPi}
\Pi(a_1,\,b_1,\,\cdots ,\, a_g,\,b_g,\, c_1,\,\cdots ,\, c_r)\ =\ c_r\ldots c_1
b_g^{-1}a_g^{-1}b_ga_g\ldots b_1^{-1}a_1^{-1}b_1a_1.
\end{equation}
Our goal is to specify a $2$-form $\Theta_{\Xi}$ on $G^{2g}\times\Xi$ whose restriction to $(\Pi\big\vert_{G^{2g}\times{\Xi}})^{-1}(e) $,
when pushed down to
the quotient $(\Pi\big\vert_{G^{2g}\times{\Xi}})^{-1}(e)/G$ is the counterpart of the Atiyah-Bott $2$-form for our surface $\Sigma$, now
with boundary. To write down an explicit formula for $\Theta_\Xi$, we need to set up some notation. We write any $\alpha\,\in\, G^{2g+r}$ as
$\alpha\,=\,(\{\alpha_j\}_{j\in J},\, \alpha_{4g+1},\, \cdots ,\ ,\alpha_{4g+r})$, where
$J$ is the indexing set
\[
J\ \stackrel{\rm def}{=} \, \{1,\,2,\, 5,\, 6,\,\cdots ,\, 4g-3,\, 4g-2\}.
\]
More explicitly,
\[
\alpha\ =\ (\alpha_1,\,\alpha_2,\,\alpha_5,\,\alpha_6,\,\cdots ,\, \alpha_{4g-3},\,\alpha_{4g-2},\, \alpha_{4g+1},\, \cdots,\, \alpha_{4g+r}).
\]
We set
\[
\alpha_3\,=\,\alpha_1^{-1},\
\alpha_4\,=\, \alpha_2^{-1},\ \cdots ,\ \alpha_{4g-1}\,=\, \alpha_{4g-3}^{-1},\ \alpha_{4g}\,=\, \alpha_{4g-2}^{-1}.
\]
We write a tangent vector $\alpha H\,\in\, T_\alpha
(G^{2g}\times G^r)$ as $(\alpha_jH_j)_{j\in J'}$, with $j$ running over
$J'\,=\, J\cup\{4g+1,\,\cdots ,\, 4g+r\}$, and we set
\[
H_{j+2}\ =\ -{\rm Ad}(\alpha_j)H_j
\]
for all $j\,\in\, J$. Let $\alpha\,\in\,\Pi^{-1}(e)\,\subset\,{G^{2g}\times\Xi}$ and $\alpha
H^{(1)},\,\alpha H^{(2)}\,\in\, T_\alpha ({G^{2g}\times\Theta})$.
Then the counterpart of the $2$-form $\Theta$ for this context is
\begin{equation}\label{eq:defOmoTh}
{ \Theta}_{\Xi}(\alpha H^{(1)},\alpha H^{(2)}) 
\ =\ \frac{1}{2}\sum_{i,j=1}^{4g+r}\epsilon_{ij}\langle
f_{i-1}^{-1}H_i^{(1)},\ f_{j-1}^{-1}
H_j^{(2)}\rangle_{LG}
\end{equation}
$$
+ \frac{1}{2}\sum_{k=1}^{r}\langle
\bigl({\rm Ad} c_k^{-1}-1\bigr)^{-1}C^{(1)}_{k}, \ \bigl({\rm Ad} c_k -{\rm Ad}
c_k^{-1}\bigr)\bigl({\rm Ad} c_k^{-1}-1\bigr)C_k^{(2)}\rangle_{LG},
$$
where we have taken $\alpha\,=\,(\{\alpha_j\}_{j\in J},\, c_1,\, \cdots ,\,c_r)$ and
set $C^{(i)}_k\,=\, H^{(i)}_{4g+k}$, and
\begin{equation}
\epsilon_{ij}\ =\ \left\{ \begin{array}{cc}
1 &\mbox{if $i<j$}\\
0 & \mbox{if $i=j$}\\
-1 & \mbox{if $i>j$}
\end{array}
\right.
\end{equation}
It is shown in \cite{Se02} that this $2$-form arises from an Atiyah-Bott-type $2$-form on the space of connections over $\Sigma$ with boundary 
holonomies restricted to lie in the conjugacy classes $\Xi_i$. Further, it is shown there that $\Theta_\Xi$ induces a symplectic form on a maximal 
stratum of the corresponding moduli space of flat connections.

\section*{Acknowledgements}

We thank the referee for helpful comments.
The first-named author acknowledges the support of a J. C. Bose Fellowship (JBR/2023/000003).

\section*{Statements and Declarations}

There is no conflict of interests regarding this manuscript. No data were generated or used.


\end{document}